\newcommand{\be}{\begin{equation}}
\newcommand{\ee}{\end{equation}}
\newcommand{\Hess}{{\rm\,Hess\,}}
\newcommand{\cn}{{\rm \,cn}}
\newcommand{\sn}{{\rm \,sn}}
\newcommand{\dn}{{\rm \,dn}}
\newcommand{\sech}{{\rm \,sech}}
\newcommand{\R}{{\mathbb R}}
\newcommand{\ve}{{\varepsilon}}
\numberwithin{equation}{section}
\numberwithin{figure}{section}
\newtheorem{theorem}{Theorem}[section]
\newtheorem{proposition}[theorem]{Proposition}
\newtheorem{remark}[theorem]{Remark}
\newtheorem{lemma}[theorem]{Lemma}
\newtheorem{corollary}[theorem]{Corollary}
\newtheorem{definition}[theorem]{Definition}
\begin{document}
\vglue-1cm \hskip1cm
\title[Stability of Periodic-Wave Solutions ]{Nonlinear Stability of Periodic-Wave Solutions for Systems of Dispersive Equations}

\begin{center}

\subjclass[2000]{35B35, 35Q51, 35Q53}

\keywords{Orbital stability, dispersive system, periodic waves}

\maketitle

{\bf Fabr\'icio Crist\'ofani }

{IMECC-UNICAMP\\
	Rua S\'ergio Buarque de Holanda, 651, CEP 13083-859, Campinas, SP,
	Brazil.}\\
{ fabriciocristofani@gmail.com}

\vspace{3mm}

{\bf Ademir Pastor}

{IMECC-UNICAMP\\
	Rua S\'ergio Buarque de Holanda, 651, CEP 13083-859, Campinas, SP,
	Brazil.}\\
{apastor@ime.unicamp.br}




\end{center}

\begin{abstract}
We prove the orbital stability of periodic traveling-wave solutions for systems of dispersive equations with coupled nonlinear terms. Our method is basically developed under two assumptions: one concerning the spectrum of the linearized operator around the traveling wave and another one concerning the existence of a conserved quantity with suitable properties. The method can be applied to several systems such as the Liu-Kubota-Ko system, the  modified KdV system and a log-KdV type system. 
\end{abstract}

\section{Introduction}

Results of orbital stability of periodic
traveling waves related to the following vector-valued system of dispersive equations
\be\label{gKDVsystem}
\partial_t u+\partial_x\left(\nabla R(u)-\mathcal{M}u \right)=0
\ee
will be shown in this manuscript. Here, $u$ is the vector $(u_1,u_2)$ where $u_1$ and $u_2$ are spatially periodic real functions depending on the real variables $(x,t)\in\R\times\R$. The nonlinear part is represented by the term $\nabla R(u)$, where $R:\R^2\to\R$ is a sufficiently smooth function. Also, the dispersion operator $\mathcal{M}$ is such that
\be\label{operatorm}
\mathcal{M}=\begin{pmatrix}
\mathcal{M}_{11} & \mathcal{M}_{12}\\
\mathcal{M}_{21} & \mathcal{M}_{22}	
\end{pmatrix}
\ee
with $\mathcal{M}_{ij}$ given as Fourier multipliers by
\be\label{symbol1}
\widehat{\mathcal{M}_{ij}g}(\kappa)=m_{ij}(\kappa)\widehat{g}(\kappa), \quad \kappa\in\mathbb{Z}.
\ee
The two-by-two matrix $m(\kappa)=(m_{ij}(\kappa))$ then  represents the symbol of $\mathcal{M}$.
Throughout the paper we assume that $m(\kappa)$  is symmetric (for each $\kappa\in\mathbb{Z}$) with $m_{ij}(\kappa)$  even and continuous functions such that, for any vector $w$ in the unity sphere of $\R^2$,
\begin{equation}\label{A1A2}
c_1|\kappa|^{s}\leq \langle m(\kappa)w,w\rangle_{\R^2}\leq c_2|\kappa|^{s}, \quad s>0, 
\end{equation}
for all $\kappa\in\mathbb{Z}$ and for some constants $c_i>0$, $i=1,2$. Here $\langle \cdot,\cdot\rangle_{\R^2}$ denotes the usual inner product in $\R^2$.

System \eqref{gKDVsystem} encompasses several important dispersive systems  arising  in physical phenomena. More specifically, system \eqref{gKDVsystem}  arises as model for nonlinear waves in a number of situations. We can cite as an example the Liu, Kubota and Ko system \cite{LKK} which models the interaction between a disturbance located at an upper pycnocline and another disturbance located
at a lower pycnocline in a three-layer fluid. Other examples are the surface water models put forward in \cite{bona4,bona5}, where the authors consider a formal derivation
of the four-parameter system (particular cases of \eqref{gKDVsystem}) in order to approximate the motion of small-amplitude long waves on the surface of an ideal fluid under the force of gravity and in situations where the motion is sensibly two dimensional.
The Gear-Grimshaw system is also a particular case of \eqref{gKDVsystem} and it was derived in \cite{GG} to model the strong interaction of two long internal gravity waves in a stratified fluid, where the two waves are assumed to correspond to different modes of the linearized equations of motion. For the precise description os these systems, see Section \ref{applications}.

Traveling waves are solutions of \eqref{gKDVsystem}  having the form $u_1(x,t)=\phi_1(x-c t)$ and $u_2(x,t)=\phi_2(x-ct)$, where $c\in\mathbb{R}$ indicates the wave speed and
$\phi_1$ and $\phi_2$ are smooth real-valued functions. Their existence implies a suitable balance between the effects of the nonlinearity and the frequency dispersion.  By replacing this form of solutions in \eqref{gKDVsystem} we see that $\phi=\left(\phi_1,\phi_2\right)$ must satisfy the nonlinear
(pseudo) differential system of equations
\be\label{travwave}
	\mathcal{M}\phi+c\phi-\nabla R(\phi)+A=0,
\ee
 where $A=(A_1,A_2)$ with $A_1$ and $A_2$ constants of integration.

The existence and orbital stability of \textit{solitary} traveling waves (this means that $\phi$ together with its all derivatives vanish at infinity) for several particular cases of system \eqref{gKDVsystem} have been considered in the literature, which we refrain from list them all here. However, our work is mainly inspired in \cite{bona3}, where the authors studied \eqref{gKDVsystem} with 
\begin{equation}\label{partM}
\mathcal{M}=
\begin{pmatrix}
-\partial_x^2 & 0 \\
0 & -\partial_x^2
\end{pmatrix}
\end{equation}
and $R$ a polynomial of degree 3.
 The authors obtained a stability criteria for proportional solitary waves of the form $(\psi,\mu\psi)$, $\mu \in \mathbb{R}$. More precisely, they proved that solitary waves of the form
$$
(\psi,\mu\psi)=(\mu_1,\mu_2)3\omega \sech^2\left(\frac{\sqrt{\omega}}{2}(x-\omega t)\right), \qquad \mu=\frac{\mu_2}{\mu_1},
$$
are orbitally stable in the energy space provided $\det\mathcal{H}(\mu_1,\mu_2)<1/2$, where 
$\mathcal{H}(\mu_1,\mu_2)$ is a suitable matrix depending on the coefficients of $R$.
Other special cases of \eqref{gKDVsystem} was studied by Albert and Linares in \cite{AL}. The authors proved the existence and stability of localized solutions representing coupled solitary waves traveling at a common speed. The developed theory  was applied to the models of Gear-Grimshaw and Liu, Kubota and Ko (\cite{GG}, \cite{LKK}).   In section \ref{applications}  we will establish the orbital stability of traveling waves for these models   in the periodic context.

 To describe our results, we assume that \eqref{gKDVsystem} conserves (at least formally) the energy
\begin{equation}\label{conser1}
E(u)=\int_{0}^{L}\left\{ \frac{1}{2}\langle \mathcal{M}u,u\rangle_{\R^2}-R(u)\right\}dx,
\end{equation}
 the momentum
\begin{equation}\label{conser2}
F(u)=\frac{1}{2}\int_{0}^{L}\langle u,u\rangle_{\R^2}dx,
\end{equation}
and the mass
\begin{equation}\label{conser3}
M(u)=\int_{0}^{L}\langle u,\overrightarrow{1}\rangle_{\R^2}dx,
\end{equation}
where $\overrightarrow{1}=(1,1)$ and  $L>0$ is the minimal period of $u$. In view of \eqref{A1A2} the function space where the energy is well defined, the so called \textit{energy space}, is the standard Sobolev space $H_{per}^{\frac{s}{2}}([0,L])\times H_{per}^{\frac{s}{2}}([0,L])$.

Now, in view of the conserved quantities $(\ref{conser1})$, $(\ref{conser2})$ and $(\ref{conser3})$ we may define the augmented Lyapunov functional
\begin{equation}\label{lyafun}
G(u)=E(u)+cF(u)+\int_{0}^{L}\langle u,A\rangle_{\R^2}dx,
\end{equation}
and the linearized operator around the wave $\phi$,
\begin{equation}\label{operatorL}
\mathcal{L}:=G''(\phi) 
=
\mathcal{M}+
c \,{\rm Id_2}- 
\Hess R(\phi)
\end{equation}
where $G''(\phi)$ stands for the Fr\'echet derivative of $G$ at $\phi$, $\Hess R$ denotes the Hessian matrix of $R$, and ${\rm Id_2}$ is the two-by-two identity matrix.
 Note that,  $\mathcal{L}$ is a well defined self-adjoint operator  in $L_{per}^2([0,L])\times L_{per}^2([0,L])$ with domain $H_{per}^s([0,L])\times H_{per}^s([0,L])$. In order to simplify notation, the  norm and inner product in $L_{per}^2([0,L])\times L_{per}^2([0,L])$ will be denoted by  $||\cdot||$ and $\langle\cdot,\cdot\rangle$. 

Before stating our stability result, we make precise the notion of orbital stability. For functions $u$ and $v$ in the energy space $X= H_{per}^\frac{s}{2}([0,L])\times H_{per}^\frac{s}{2}([0,L])$ we consider $\rho$ as the ``distance'' between $u$ and $v$  defined  by
\begin{equation*}
\rho(u,v)=\inf_{y\in\mathbb{R}}||u-v(\cdot+y)||_{X}.
\end{equation*}
It is to be clear that the distance between $u$ and $v$ is measured trough the distance between $u$ and the orbit of $v$, generated under the action of translations. 

\begin{definition}\label{defstab}
	Let $\phi$ be an $L$-periodic solution of \eqref{travwave}.
	We say  $\phi$ is orbitally stable in $X$, by the periodic flow of \eqref{gKDVsystem},  if for any $\ve>0$ there exists $\delta>0$ such that for any $u_0\in X$ satisfying $\|u_0-\phi\|_X<\delta$, the solution $u(t)$ of \eqref{gKDVsystem} with initial data $u_0$ exists globally and satisfies
	$$
	\rho(u(t),\phi)<\ve,
	$$
	for all $t\geq0$.
\end{definition}

The notion of orbital stability prescribes the existence of global solutions. Since questions of (local and) global well-posedness is out of the scope of this paper, we will assume  the periodic Cauchy problem associated with (\ref{gKDVsystem}) is globally well-posed in $X$. We point out, however, that several theories related to the initial-value problem posed on $\mathbb{R}$ or in the periodic setting have been developed recently (see, for instance, \cite{AC,ACW, absta, oh}).

Our primary goal in this paper is to establish a criterion for the orbital stability for coupled dispersive systems based on the works \cite{ANP}, \cite{NP1}  and \cite{Stuart}. More specifically, we present a method to obtain the orbital stability of periodic traveling waves for dispersive systems by assuming the following assumptions:

\vspace{0.3cm}

\noindent 	$(H_1)$ There exists an $L$-periodic solution of (\ref{travwave}), say,  $\phi \in C^{\infty}_{per}([0,L])\times C^{\infty}_{per}([0,L])$, with minimal period $L>0$. Moreover, the self-adjoint operator $\mathcal{L}$ has only one negative eigenvalue, which is simple, and zero is a simple eigenvalue with associated eigenfunction $\phi'=\left(\phi_1',\phi_2'\right)$.

\vspace{0.2cm}

\noindent	$(H_2)$ There exists $\Phi\in X=H_{per}^\frac{s}{2}([0,L])\times H_{per}^\frac{s}{2}([0,L])$ such that
\begin{equation}\label{vak}
	I:=\langle\mathcal{L}\Phi,\Phi\rangle<0 \quad\mbox{and}\quad \langle\mathcal{L}\Phi,\phi'\rangle=0, 
\end{equation}
 Moreover, there exists a conserved functional $Q=Q(u)$  such that $Q'(\phi)=\mathcal{L}\Phi$.

\vspace{0.3cm}

We point out that we are not assuming the existence of a smooth curve (or surface) of periodic  traveling waves. The first assumption in \eqref{vak} appears as a substitute of the standard Vakhitov-Kolokolov type condition, which is crucial, for instance in the seminal works of Bona, Souganidis, and Strauss \cite{bona2} and Grillakis, Shatah, and Strauss \cite{grillakis1}.

Using these two assumptions our main result reads as follows.

\begin{theorem}\label{teoest} If assumptions $(H_1)$ and $(H_2)$ hold then $\phi$ is orbitally stable in $X$ by the periodic flow of $(\ref{gKDVsystem})$.
\end{theorem}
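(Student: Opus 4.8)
The plan is to exploit the fact that $\phi$ is a critical point of the conserved augmented functional $G$ — indeed $(\ref{travwave})$ is precisely $G'(\phi)=0$ — whose second variation is the self-adjoint operator $\mathcal{L}$ of $(\ref{operatorL})$. Since $\mathcal{L}$ is indefinite (one negative direction together with the kernel spanned by $\phi'$), stability cannot follow from naive convexity; instead I would establish a coercivity estimate for the form $\langle\mathcal{L}\cdot,\cdot\rangle$ on a codimension-two subspace on which the negative and the zero directions are suppressed, and then run a contradiction argument driven by the conservation of both $G$ and the functional $Q$ supplied by $(H_2)$.

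First I would prove the underlying spectral lemma. Setting $\psi:=\mathcal{L}\Phi=Q'(\phi)$, hypothesis $(H_2)$ says exactly that $\langle\psi,\phi'\rangle=0$, so $\psi\in\mathrm{Range}(\mathcal{L})$, and $\langle\mathcal{L}^{-1}\psi,\psi\rangle=\langle\mathcal{L}\Phi,\Phi\rangle=I<0$. Using $(H_1)$ (a single simple negative eigenvalue, simple kernel $\mathrm{span}\{\phi'\}$, and discrete spectrum with a positive gap above zero because the domain is bounded) together with a min--max/Lagrange-multiplier computation, this sign condition forces
\begin{equation*}
\nu_0:=\inf\big\{\langle\mathcal{L}v,v\rangle : v\in\mathcal{N},\ \|v\|=1\big\}>0,\qquad \mathcal{N}:=\{v\in X:\ \langle v,\psi\rangle=0,\ \langle v,\phi'\rangle=0\}.
\end{equation*}
Because $\mathcal{M}$ has symbol growing like $|\kappa|^{s}$ by $(\ref{A1A2})$ while $c\,\mathrm{Id}_2-\Hess R(\phi)$ is a bounded multiplication perturbation, a G\aa rding inequality $\langle\mathcal{L}v,v\rangle+C\|v\|^2\geq c\|v\|_X^2$ holds; combining it with $\nu_0>0$ and the compact embedding of $X$ into $L^2$ upgrades the bound to genuine $X$-coercivity, namely $\langle\mathcal{L}v,v\rangle\geq\beta\|v\|_X^2$ for all $v\in\mathcal{N}$ and some $\beta>0$.

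Next I would set up the modulation. For $u$ near the orbit of $\phi$, the implicit function theorem yields a translation parameter $y=y(u)$ such that $v:=u(\cdot+y)-\phi$ satisfies $\langle v,\phi'\rangle=0$ with $\|v\|_X$ comparable to $\rho(u,\phi)$. Taylor expanding the two conserved functionals about $\phi$, and using $G'(\phi)=0$, $Q'(\phi)=\psi$, together with the translation invariance of $G$ and $Q$, gives
\begin{equation*}
G(u)-G(\phi)=\tfrac12\langle\mathcal{L}v,v\rangle+o(\|v\|_X^2),\qquad Q(u)-Q(\phi)=\langle\psi,v\rangle+O(\|v\|_X^2).
\end{equation*}
The second identity measures the failure of the constraint defining $\mathcal{N}$: writing $v=aw+v_\perp$ with $v_\perp\in\mathcal{N}$ and $w$ a fixed vector with $\langle w,\phi'\rangle=0$, $\langle w,\psi\rangle\neq0$, the coefficient $a$ is controlled by $|Q(u)-Q(\phi)|+O(\|v\|_X^2)$.

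Finally I would argue by contradiction. Were $\phi$ unstable, there would exist data $u_0^n\to\phi$ in $X$ and first times $t_n$ with $\rho(u^n(t_n),\phi)=\ve_0$ for a fixed small $\ve_0$. Setting $u_n:=u^n(t_n)$ and modulating, conservation gives $G(u_n)\to G(\phi)$ and $Q(u_n)\to Q(\phi)$, whence $a_n=o(1)+O(\ve_0^2)$ and $\|v_{n,\perp}\|_X\geq\ve_0/2$ for $n$ large and $\ve_0$ small. Expanding $\langle\mathcal{L}v_n,v_n\rangle$ in the splitting $v_n=a_nw+v_{n,\perp}$ and invoking $\langle\mathcal{L}v_{n,\perp},v_{n,\perp}\rangle\geq\beta\|v_{n,\perp}\|_X^2$ while absorbing the cross terms yields $G(u_n)-G(\phi)\geq\tfrac{\beta}{8}\ve_0^2>0$ for $\ve_0$ small and $n$ large, contradicting $G(u_n)-G(\phi)\to0$; hence $\phi$ is orbitally stable. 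I expect the genuine obstacle to be the spectral lemma giving $\nu_0>0$: one must verify that $\langle\mathcal{L}^{-1}\psi,\psi\rangle<0$ exactly compensates the lone negative eigenvalue on $\mathcal{N}$, and care is also needed to handle the mean (zero-$\kappa$) mode of $\mathcal{L}$ and to control the Taylor remainders when $s$ is small, where $X$ fails to embed in $L^\infty$.
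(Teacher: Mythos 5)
Your proposal is sound, but it follows the classical constrained-coercivity route of Grillakis--Shatah--Strauss \cite{grillakis1} and Bona--Souganidis--Strauss \cite{bona2} rather than the penalization route the paper actually takes. The two arguments share their first step: your spectral lemma (positivity of $\langle\mathcal{L}\cdot,\cdot\rangle$ on $\mathcal{N}=\{v\in X:\langle v,\mathcal{L}\Phi\rangle=\langle v,\phi'\rangle=0\}$, deduced from $I=\langle\mathcal{L}\Phi,\Phi\rangle<0$ and the spectral data in $(H_1)$) is exactly Proposition~\ref{operatorposit}, proved in the paper by the decomposition $L^2_{per}\times L^2_{per}=[\chi]\oplus[\phi']\oplus P$, which carries the same content as the min--max argument you sketch; likewise your G\aa rding-plus-compactness upgrade to $X$-coercivity corresponds to inequality \eqref{ltau}. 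After that the routes diverge. The paper, following Stuart \cite{Stuart}, removes the constraint $\langle v,\mathcal{L}\Phi\rangle=0$ by a penalty: Lemma~\ref{lemma1} shows $\langle\mathcal{L}v,v\rangle+2N\langle\mathcal{L}\Phi,v\rangle^{2}\geq\widetilde{\tau}\|v\|_{X}^2$ on all of $\{\phi'\}^{\perp}$, and one then works with the conserved functional $V(u)=G(u)-G(\phi)+N(Q(u)-Q(\phi))^2$, whose coercivity $V(u)\geq D\rho(u,\phi)^2$ near the orbit (Lemma~\ref{lemma2}, quoted from \cite{CNP2}) yields stability by a direct continuity-in-time argument---no modulation parameter, no splitting of the perturbation, and no contradiction sequences appear in the dynamical part. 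You instead keep the constraint and restore it approximately along the flow, using conservation of $Q$ to control the coefficient $a$ of the transverse direction $w$ (note you may simply take $w=\psi=\mathcal{L}\Phi$, since $\langle\psi,\phi'\rangle=0$ by $(H_2)$ and $\psi\neq0$ because $I<0$), closing with a contradiction argument; this is correct but requires the cross-term bookkeeping you describe. What each buys: your route is self-contained and makes transparent exactly where the conservation of $Q$ enters the dynamics; Stuart's trick concentrates all Taylor-expansion technicalities into the single coercivity statement for $V$ and gives a shorter, cleaner endgame. Finally, the caveats you flag are genuine but not specific to your approach: the expansions $G(u)-G(\phi)=\tfrac12\langle\mathcal{L}v,v\rangle+o(\|v\|_X^2)$ and $Q(u)-Q(\phi)=\langle Q'(\phi),v\rangle+O(\|v\|_X^2)$ are needed by the paper as well---they are hidden in the proof of Lemma~\ref{lemma2} cited from \cite{CNP2}---and that is precisely where the smoothness of $R$ and the failure of the embedding of $X$ into $L^\infty$ for small $s$ must be confronted in either treatment.
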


To prove Theorem \ref{teoest}, we follow the strategy in \cite{ANP}, \cite{CNP2}, \cite{NP1}  and \cite{Stuart}. 
Note that  solutions of \eqref{travwave} are critical points of the functional \eqref{lyafun}. Thus, as usual, it is expected that the stability of such critical points may be determined by studying the spectral properties of $\mathcal{L}$. 
The fundamental idea proposed in \cite{Stuart} is that instead of $G$, one uses the functional
$$V(u):=G(u)-G(\phi)+N(Q(u)-Q(\phi))^2,$$ 
as a Lyapunov function, where $N>0$ is a suitable constant to be determined later and $Q$ is defined in $(H_2)$. This function has the advantage that its coercivity can be proved in a simpler way (see Lemma \ref{lemma2}).

Our paper is organized as follows. In next section we will adapt the theory mentioned above in order to prove  Theorem \ref{teoest}. Section \ref{applications} is devoted to some applications of our developments. In particular, we show the orbital stability of periodic waves for the Liu-Kubota-Ko system and for coupled systems of KdV, mKdV, and Log-KdV equations.

\section{Proof of Theorem \ref{teoest}}\label{OSPW}

In this section, we basically rewrite the arguments in \cite{ANP}, \cite{CNP2}, \cite{NP1}  and \cite{Stuart}  to establish the theory of orbital stability for coupled dispersive systems. The next proposition is a classical result inserted in \cite{grillakis1} adapted to our case.

\begin{proposition}\label{operatorposit}
Suppose that assumptions $(H_1)$ and $(H_2)$ holds. Then, there exists a constant $\tau>0$ such that
$$\langle\mathcal{L}v,v\rangle\geq \tau||v||_{X}^2,$$
for all $v\in \Upsilon\cap \{\phi'\}^\perp$, where $\{\phi'\}^{\perp}:=\{u\in X ; \langle\phi',u\rangle=0\}$ and $\Upsilon:= \{u\in X; \, \langle\mathcal{L}\Phi,u\rangle=0\}$.
\end{proposition}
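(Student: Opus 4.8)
The plan is to exploit the spectral structure of $\mathcal{L}$ furnished by $(H_1)$ together with the sign condition $I<0$ of $(H_2)$, by means of an explicit orthogonal decomposition (this is the concrete realization of the classical lemma of \cite{grillakis1}). First I would record that, since the symbol obeys $c_1|\kappa|^s\leq\langle m(\kappa)w,w\rangle_{\R^2}$ in \eqref{A1A2} with $s>0$, the eigenvalues of $m(\kappa)$ tend to $+\infty$ as $|\kappa|\to\infty$; hence $\mathcal{M}$ has compact resolvent, and since $\Hess R(\phi)$ is a bounded (smooth, multiplication) perturbation, so does $\mathcal{L}$. Thus the spectrum of $\mathcal{L}$ is purely discrete and accumulates only at $+\infty$, and by $(H_1)$ it consists of a simple negative eigenvalue $-\sigma$ ($\sigma>0$) with $L^2$-normalized eigenfunction $\chi$, the simple eigenvalue $0$ with eigenfunction $\phi'$, and a remaining part contained in $[\beta,\infty)$ for some $\beta>0$. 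This yields the $L^2$-orthogonal splitting $L_{per}^2\times L_{per}^2=\R\chi\oplus\R\phi'\oplus\mathcal{P}$, with $\langle\mathcal{L}p,p\rangle\geq\beta\|p\|^2$ for $p\in\mathcal{P}$, all pairings $\langle\mathcal{L}\cdot,\cdot\rangle$ being understood as the symmetric bilinear form associated with $\mathcal{L}$ on its form domain $X$.

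Next comes the heart of the argument. Given $v\in\Upsilon\cap\{\phi'\}^\perp$, the constraint $\langle\phi',v\rangle=0$ forces the $\phi'$-component to vanish, so I write $v=a\chi+p$ with $p\in\mathcal{P}$, and decompose $\Phi=\alpha\chi+\gamma\phi'+r$ with $r\in\mathcal{P}$. Since $\mathcal{L}\phi'=0$ and $\mathcal{L}\chi=-\sigma\chi$, one finds $\langle\mathcal{L}\Phi,\Phi\rangle=-\sigma\alpha^2+\langle\mathcal{L}r,r\rangle$, so the hypothesis $I<0$ forces both $\alpha\neq0$ and $\langle\mathcal{L}r,r\rangle<\sigma\alpha^2$. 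The constraint $v\in\Upsilon$, that is $\langle\mathcal{L}\Phi,v\rangle=0$, reduces to $\sigma\alpha a=\langle\mathcal{L}r,p\rangle$. Because $\langle\mathcal{L}\cdot,\cdot\rangle$ is a genuine inner product on $\mathcal{P}$, Cauchy--Schwarz gives $\sigma^2\alpha^2a^2\leq\langle\mathcal{L}r,r\rangle\,\langle\mathcal{L}p,p\rangle$, whence
\[
\langle\mathcal{L}v,v\rangle=-\sigma a^2+\langle\mathcal{L}p,p\rangle\geq\Big(1-\tfrac{\langle\mathcal{L}r,r\rangle}{\sigma\alpha^2}\Big)\langle\mathcal{L}p,p\rangle=:\delta\,\langle\mathcal{L}p,p\rangle,
\]
with $\delta>0$ precisely because $\langle\mathcal{L}r,r\rangle<\sigma\alpha^2$. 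This already establishes positivity.

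Finally I would upgrade this $L^2$-type lower bound to the stated $X$-coercivity. Combining $\langle\mathcal{L}p,p\rangle\geq\beta\|p\|^2$ with the coercive lower bound $\langle\mathcal{M}p,p\rangle\geq c_1\|p\|_X^2-C\|p\|^2$ coming from \eqref{A1A2} and the boundedness of $\Hess R(\phi)$, one obtains $\langle\mathcal{L}p,p\rangle\geq c_3\|p\|_X^2$ for $p\in\mathcal{P}$. The relation $\sigma^2\alpha^2a^2\leq\langle\mathcal{L}r,r\rangle\langle\mathcal{L}p,p\rangle$ controls $a^2$ by $\langle\mathcal{L}p,p\rangle$, and hence by $\langle\mathcal{L}v,v\rangle$ through the previous paragraph; together with $\|v\|_X^2\leq 2a^2\|\chi\|_X^2+2\|p\|_X^2$ this yields $\|v\|_X^2\leq C'\langle\mathcal{L}v,v\rangle$, i.e. $\langle\mathcal{L}v,v\rangle\geq\tau\|v\|_X^2$ with $\tau=1/C'$.

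The main obstacle is the positivity step: controlling the single negative direction $a\chi$ using only the two scalar constraints. The whole argument turns on extracting, from the single inequality $I<0$, the two facts $\alpha\neq0$ and $\langle\mathcal{L}r,r\rangle<\sigma\alpha^2$ that make the Cauchy--Schwarz estimate produce a strictly positive factor $\delta$. A secondary technical point is the correct reading of the pairings $\langle\mathcal{L}\Phi,\cdot\rangle$ as a bounded symmetric form on $X=H_{per}^{\frac{s}{2}}([0,L])\times H_{per}^{\frac{s}{2}}([0,L])$ (so that one does not need $\Phi\in H_{per}^s\times H_{per}^s$), and the passage from the $L^2$ to the $X$ norm, which relies essentially on the ellipticity encoded in \eqref{A1A2}.
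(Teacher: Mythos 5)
Your proof is correct, and its core is the same as the paper's: the $L^2$-orthogonal splitting $[\chi]\oplus[\phi']\oplus P$ furnished by $(H_1)$, the decompositions $v=a\chi+p$ and $\Phi=\alpha\chi+\gamma\phi'+r$, and Cauchy--Schwarz for the positive form $\langle\mathcal{L}\cdot,\cdot\rangle$ on the positive subspace combined with $I<0$. The genuine difference is in how the \emph{uniform} constant is extracted. The paper only establishes the strict pointwise inequality $\langle\mathcal{L}v,v\rangle>0$ for each unit-norm $v$ in the constrained set, and must then invoke a separate contradiction/compactness argument (Lemma 3.5 of \cite{ACN}) to produce $\tau_2>0$ with $\langle\mathcal{L}v,v\rangle\geq\tau_2\|v\|^2$. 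Your version is quantitative: keeping Cauchy--Schwarz as an inequality between quadratic quantities gives $\langle\mathcal{L}v,v\rangle\geq\delta\,\langle\mathcal{L}p,p\rangle$ with $\delta=1-\langle\mathcal{L}r,r\rangle/(\sigma\alpha^2)>0$ depending only on $\Phi$ (not on $v$), and the same relation $\sigma^2\alpha^2a^2\leq\langle\mathcal{L}r,r\rangle\langle\mathcal{L}p,p\rangle$ also controls the coefficient $a^2$ of the negative direction; together these yield the uniform $L^2$ bound directly, so the contradiction lemma becomes unnecessary. What this buys is a self-contained, fully explicit constant; what it costs is nothing, since the algebra is the one already present in the paper's display following \eqref{a2} (where, incidentally, the denominators $\langle\mathcal{L}p_0,p_2\rangle$ and $a_0\sigma^2$ are typos for $\langle\mathcal{L}p_0,p_0\rangle$ and $a_0^2\sigma^2$). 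The final upgrade from $L^2$ to $X$ via the G{\aa}rding-type inequality coming from \eqref{A1A2} and the boundedness of $\Hess R(\phi)$ is the same in both proofs (it is the paper's inequality \eqref{ltau}), and your preliminary observation that $\mathcal{L}$ has compact resolvent, so that the positive part of the spectrum is bounded away from zero, makes explicit a point the paper leaves implicit in its appeal to the spectral decomposition theorem.
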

\begin{proof}
By using $(H_1)$ and the spectral decomposition theorem we can write
\be\label{decomp}
L_{per}^2([0,L])\times L_{per}^2([0,L])=[\chi]\oplus [\phi']\oplus P,
\ee
where $\chi$ is a normalized eigenfunction  associated to the unique negative eigenvalue  of $\mathcal{L}$, that is, $||\chi||=1$ and $\mathcal{L}\chi=-\sigma^2\chi$, $\sigma\neq0$. In addition $P$ denotes the positive subspace of $\mathcal{L}$, that is,
 $$\langle \mathcal{L}p,p\rangle\geq \tau_1||p||^2,\ \ \ \mbox{for all}\ p\in X\cap P,$$
for some positive constant $\tau_1$ .

Next, if $\Phi$ is defined in $(H_2)$, from $(\ref{decomp})$, we write
$$
\Phi=a_0\chi+b_0\phi'+p_0,\ \ \ \ \ a_0,b_0\in\mathbb{R},
$$
with $p_0\in X\cap P$. Thus, from $(H_2)$, 
\be\label{a1}
\langle \mathcal{L} p_0,p_0\rangle=\langle\mathcal{L}(\Phi-a_0\chi-b_0\phi'),\Phi-a_0\chi-b_0\phi'\rangle
=I+a_0^2\sigma^2<a_0^2\sigma^2.
\ee
Now, we take $v\in \Upsilon\cap \{\phi'\}^\perp$ with $||v||=1$. So we can write $v=a_1\chi+p_1$, where $p_1\in X\cap P$. Consequently,
\begin{equation}\label{a2}
0=\langle\mathcal{L}\Phi,v\rangle=\langle -a_0\sigma^2\chi +\mathcal{L}p_0,a_1\chi+p_1\rangle
=-a_0a_1\sigma^2+\langle\mathcal{L}p_0,p_1\rangle.
\end{equation}
From \eqref{a1} and \eqref{a2} we then deduce
$$
\langle \mathcal{L}v,v\rangle=-a_1^2\sigma^2+\langle \mathcal{L}p_1,p_1\rangle\geq -a_1^2\sigma^2+\frac{\langle \mathcal{L}p_0,p_1\rangle^2}{\langle \mathcal{L}p_0,p_2\rangle}>-a_1^2\sigma^2+\frac{(a_0a_1\sigma^2)^2}{a_0\sigma^2}=0.
$$
By using a contradiction argument (see \cite[Lemma 3.5]{ACN}) it is possible to verify that there exists a constant $\tau_2>0$ such that
\begin{equation}\label{ltau2}
\langle\mathcal{L}v,v\rangle\geq \tau_2||v||^2
\end{equation}
for all $v\in \Upsilon\cap \{\phi'\}^\perp$.

Finally, by considering $\Lambda$ as a particular case of \eqref{operatorm} such that $m_{ii}(\kappa)=|k|$, $i=1,2$, and $m_{12}(\kappa)=m_{21}(\kappa)=0$, we obtain, from \eqref{A1A2} and \eqref{ltau2},
\begin{equation}\label{ltau}
\left(1-c_3b\|q(\phi)\|_{Y}\right)\|v\|^2+b\|\Lambda^{s/2}v\|^2\leq \left(\frac{1}{\tau_2}+c_3b\right)\langle\mathcal{L}v,v\rangle,
\end{equation}
where $q(\phi)=c \,{\rm Id_2}-\Hess R(\phi)$, $c_3>0$ is a constant, $b>0$ is an arbitrary number chosen such that $\left(1-c_3b\|q(\phi)\|_{Y}\right)>0$  and $Y$ represents the space $L^{\infty}_{per}([0,L])\times L^{\infty}_{per}([0,L])$. Inequality \eqref{ltau} allows us to conclude the desired.
\end{proof}

With the above proposition in hand we  establish the following.

\begin{lemma}\label{lemma1}
Under assumptions of Proposition \ref{operatorposit}, there exist $N>0$ and $\widetilde{\tau}>0$ such that
$$\langle\mathcal{L}v,v\rangle +2N\langle \mathcal{L}\Phi,v\rangle^{2}\geq \widetilde{\tau}||v||_{X}^2,$$
for all $v\in \left\{\phi'\right\}^{\perp}$.
\end{lemma}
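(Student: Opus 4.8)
The plan is to reduce the asserted inequality over the whole subspace $\{\phi'\}^\perp$ to the coercivity already furnished by Proposition \ref{operatorposit} on the smaller set $\Upsilon\cap\{\phi'\}^\perp$, by splitting off the single direction along which the functional $v\mapsto\langle\mathcal{L}\Phi,v\rangle$ fails to vanish. First I would replace $\Phi$ by its $L^2$-orthogonal projection onto $\{\phi'\}^\perp$, namely $\widetilde\Phi:=\Phi-\frac{\langle\Phi,\phi'\rangle}{\|\phi'\|^2}\phi'$, which still lies in $X$. Using $(H_2)$ together with $\mathcal{L}\phi'=0$ from $(H_1)$, one checks that, as functionals on $X$, $\langle\mathcal{L}\widetilde\Phi,\cdot\rangle=\langle\mathcal{L}\Phi,\cdot\rangle$ and that $\langle\mathcal{L}\Phi,\widetilde\Phi\rangle=I<0$. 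Thus $\widetilde\Phi\in\{\phi'\}^\perp$ is a convenient fixed vector transverse to $\Upsilon$.

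Next, for an arbitrary $v\in\{\phi'\}^\perp$ I would write the decomposition $v=w+\beta\widetilde\Phi$ with $\beta:=\langle\mathcal{L}\Phi,v\rangle/I$ and $w:=v-\beta\widetilde\Phi$. By construction $\langle\mathcal{L}\Phi,w\rangle=0$, so $w\in\Upsilon\cap\{\phi'\}^\perp$ and Proposition \ref{operatorposit} applies to $w$, giving $\langle\mathcal{L}w,w\rangle\geq\tau\|w\|_X^2$. Expanding $\langle\mathcal{L}v,v\rangle$ and using $\langle\mathcal{L}\widetilde\Phi,\cdot\rangle=\langle\mathcal{L}\Phi,\cdot\rangle$ together with $\langle\mathcal{L}\Phi,w\rangle=0$, the cross term drops out and one is left with $\langle\mathcal{L}v,v\rangle=\langle\mathcal{L}w,w\rangle+\beta^2 I$. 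Since $\langle\mathcal{L}\Phi,v\rangle^2=\beta^2 I^2$, adding the penalty yields $\langle\mathcal{L}v,v\rangle+2N\langle\mathcal{L}\Phi,v\rangle^2\geq\tau\|w\|_X^2+\beta^2(I+2NI^2)$.

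The decisive step, and the one where the hypothesis $I<0$ is indispensable, is the choice of $N$: writing $I+2NI^2=I(1+2NI)$, I would take any $N>1/(2|I|)$ so that $1+2NI<0$ and hence $I+2NI^2>0$. With such $N$ both terms on the right are nonnegative and bounded below by $C(\|w\|_X^2+\beta^2)$, where $C:=\min\{\tau,\,I+2NI^2\}>0$. Finally I would return to $v$: since $v=w+\beta\widetilde\Phi$, the triangle inequality gives $\|v\|_X^2\leq 2\|w\|_X^2+2\beta^2\|\widetilde\Phi\|_X^2$, whence $\|w\|_X^2+\beta^2\geq c\,\|v\|_X^2$ for some $c>0$ depending only on $\|\widetilde\Phi\|_X$. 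Combining the two bounds establishes the claim with $\widetilde\tau:=cC$.

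I expect the only genuine obstacle to be the bookkeeping that makes the cross term vanish, that is, verifying $\langle\mathcal{L}\widetilde\Phi,\cdot\rangle=\langle\mathcal{L}\Phi,\cdot\rangle$ on $X$ (so that $\mathcal{L}\Phi$ is handled in the bilinear-form sense, consistent with the pairing $\langle\mathcal{L}\Phi,\Phi\rangle$ appearing in $(H_2)$) and $\langle\mathcal{L}\Phi,\widetilde\Phi\rangle=I$. Once these identities are in place the penalized quadratic form collapses exactly to $\tau\|w\|_X^2+\beta^2(I+2NI^2)$, and everything else is the routine tuning of constants; the conceptual content is simply that the penalty $2N\langle\mathcal{L}\Phi,\cdot\rangle^2$ is calibrated to overcome the single negative contribution $\beta^2 I$ left after projecting onto $\Upsilon$.
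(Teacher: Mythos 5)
Your proof is correct, and it takes a genuinely different route from the paper's, even though both rest on the same reduction to Proposition \ref{operatorposit}. The paper splits $v$ $L^2$-orthogonally along the direction $w=Q'(\phi)/\|Q'(\phi)\|=\mathcal{L}\Phi/\|\mathcal{L}\Phi\|$, writing $v=\zeta w+z$ with $\zeta=\langle v,w\rangle$; there the remainder $z$ also lands in $\Upsilon\cap\{\phi'\}^\perp$, but the cross term $2\zeta\langle\mathcal{L}w,z\rangle$ does \emph{not} vanish and must be absorbed by Cauchy--Schwarz and Young's inequality, after which $N$ is chosen implicitly large enough that $\langle\mathcal{L}w,w\rangle-\tfrac{2}{\tau}\|\mathcal{L}w\|_X^2+2N\|Q'(\phi)\|^2\geq\tfrac{\tau}{2}\|w\|_X^2$. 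You instead split obliquely along $\widetilde\Phi$ (the projection of $\Phi$ off $\phi'$), with coefficient $\beta=\langle\mathcal{L}\Phi,v\rangle/I$, which is precisely calibrated so that the remainder lies in $\Upsilon$ and the cross term $\langle\mathcal{L}\widetilde\Phi,w\rangle=\langle\mathcal{L}\Phi,w\rangle$ vanishes \emph{exactly}; this eliminates the Young step, makes the role of $I<0$ transparent, and yields an explicit threshold $N>1/(2|I|)$. A further, not merely cosmetic, advantage of your version: the paper's argument needs $\|\mathcal{L}w\|_X$ to be finite (i.e.\ extra regularity of $Q'(\phi)=\mathcal{L}\Phi$, which is only implicitly available), whereas you use $\mathcal{L}$ exclusively in the bilinear-form sense, consistent with how $(H_2)$ is stated. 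Two small points you should make explicit in a final write-up: $\widetilde\Phi\neq0$ (immediate, since otherwise $I=\langle\mathcal{L}\Phi,\widetilde\Phi\rangle=0$, contradicting $I<0$), and the identity $\mathcal{L}\widetilde\Phi=\mathcal{L}\Phi$ uses $\mathcal{L}\phi'=0$ from $(H_1)$, which you do invoke.
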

\begin{proof}
Given any $v\in \left\{\phi'\right\}^{\perp}$, we set
	$$z=v-\zeta w,$$
	where $w=\frac{Q'(\phi)}{||Q'(\phi)||}$ and $\zeta=\langle v,w\rangle$. Thus, $z\in\Upsilon\cap \left\{\phi'\right\}^{\perp}$ and therefore using Proposition \ref{operatorposit} we are able to establish that  
	\begin{equation}\label{eq01}
	\langle\mathcal{L}v,v\rangle \geq \zeta^2\langle\mathcal{L}w,w\rangle + 2\zeta\langle\mathcal{L}w,z\rangle +\tau||z||_X^2.
	\end{equation}

	Now, using Cauchy-Schwartz and Young's inequalities, we obtain
	\begin{equation}\label{eq02}
	|2\zeta\langle\mathcal{L}w,z\rangle| \leq \frac{\tau}{2}||z||_X^2 + \frac{2\zeta^2}{\tau}||\mathcal{L}w||_X^2.
	\end{equation}
	Furthermore, choosing $N=N(\phi)>0$ sufficiently large such that
	\begin{equation}\label{eq03}
	\langle\mathcal{L}w,w\rangle -\frac{2}{\tau}||\mathcal{L}w||_X^2 +2N||Q'(\phi)||^2\geq \frac{\tau}{2}\|w\|_{X}^2,
	\end{equation}
	we obtain, using (\ref{eq01}), (\ref{eq02}) and (\ref{eq03}),
	\[
	\begin{split}
	\langle\mathcal{L}v,v\rangle +2N\langle Q'(\phi),v\rangle^{2}&=\langle\mathcal{L}v,v\rangle + 2N\zeta^2||Q'(\phi)||^2  \\
	&\geq\frac{\tau}{2}(\zeta^2\|w\|^2_X+||z||_X^2)  \\
	&\geq \widetilde{\tau}||v||_{X}^2. 
	\end{split}
	\]
	The proof is thus completed.
\end{proof}

We define now the functional $V:X\rightarrow\R$ as
\begin{equation}\label{functionalV}
V(u)=G(u)-G(\phi)+N(Q(u)-Q(\phi))^2,
\end{equation}
where $G$ is the augmented functional defined in (\ref{lyafun}) and $N>0$ is the constant obtained in the previous lemma. It is easy to see from $(\ref{travwave})$ that $V(\phi)=0$ and $V'(\phi)=0$. Moreover, we have the following lemma.

\begin{lemma}\label{lemma2}
	There exist $\alpha>0$ and $D>0$ such that 
	$$V(u)\geq D\rho(u,\phi)^2$$
	for all $u\in U_{\alpha}:=\{u\in X;\ \rho(u,\phi) < \alpha\}$.
\end{lemma}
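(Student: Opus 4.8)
The plan is to recognize $V$ as essentially its own second-order Taylor polynomial at $\phi$, whose Hessian is precisely the quadratic form controlled by Lemma~\ref{lemma1}, and then to convert the orbital distance $\rho$ into a genuine $X$-distance by a modulation argument exploiting translation invariance.

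First I would record the local structure of $V$ at $\phi$. Since $\phi$ solves \eqref{travwave} we already know $V(\phi)=0$ and $V'(\phi)=0$, and a direct computation of the second Fr\'echet derivative gives
\begin{equation*}
\langle V''(\phi)v,v\rangle=\langle\mathcal{L}v,v\rangle+2N\langle Q'(\phi),v\rangle^{2}=\langle\mathcal{L}v,v\rangle+2N\langle\mathcal{L}\Phi,v\rangle^{2},
\end{equation*}
where the contribution proportional to $Q(u)-Q(\phi)$ vanishes upon evaluation at $u=\phi$, and where I used $Q'(\phi)=\mathcal{L}\Phi$ from $(H_2)$. This is exactly the form estimated in Lemma~\ref{lemma1}, so $\langle V''(\phi)v,v\rangle$ is coercive with constant $\widetilde\tau$ on $\{\phi'\}^{\perp}$.

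Next I would set up the modulation. Because $G$ and $Q$ are translation invariant (they are integrals over a period), so is $V$, hence it suffices to treat $u$ that is genuinely close to $\phi$ in $X$: given $u\in U_\alpha$ I choose a translate $\phi(\cdot+y_0)$ realizing (or nearly realizing) the infimum $\rho(u,\phi)$ and replace $u$ by $u(\cdot-y_0)$, which changes neither $V(u)$ nor $\rho(u,\phi)$. Applying the implicit function theorem to the map $(u,y)\mapsto\langle u-\phi(\cdot+y),\phi'(\cdot+y)\rangle$, whose $y$-derivative at $(\phi,0)$ equals $-\|\phi'\|^{2}\neq0$, I obtain, for $\alpha$ small, a $C^{1}$ choice $y=y(u)$ with $y(\phi)=0$ such that $v:=u(\cdot-y(u))-\phi$ satisfies $\langle v,\phi'\rangle=0$, that is $v\in\{\phi'\}^{\perp}$, with $\|v\|_X\to0$ as $\alpha\to0$. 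I would then Taylor expand: using $V(\phi)=V'(\phi)=0$ and the Hessian above,
\begin{equation*}
V(u)=V(\phi+v)=\tfrac12\langle\mathcal{L}v,v\rangle+N\langle\mathcal{L}\Phi,v\rangle^{2}+o(\|v\|_X^{2})\geq\tfrac{\widetilde\tau}{2}\|v\|_X^{2}+o(\|v\|_X^{2})
\end{equation*}
by Lemma~\ref{lemma1}. Shrinking $\alpha$ so the remainder is absorbed yields $V(u)\geq\tfrac{\widetilde\tau}{4}\|v\|_X^{2}$, and since $\|v\|_X=\|u-\phi(\cdot+y(u))\|_X\geq\rho(u,\phi)$ by the very definition of the infimum, the claim follows with $D=\widetilde\tau/4$.

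The step I expect to be the main obstacle is the remainder control, namely establishing $V(\phi+v)-\tfrac12\langle V''(\phi)v,v\rangle=o(\|v\|_X^{2})$ uniformly as $\|v\|_X\to0$. This hinges on the $C^{2}$-regularity, with locally continuous second derivative, of the maps $u\mapsto\int_0^{L}R(u)\,dx$ and $u\mapsto Q(u)$ on $X$, which is where the smoothness hypotheses on $R$ and the Sobolev embedding of $H_{per}^{\frac{s}{2}}([0,L])$ enter; the modulation step itself is routine once the nondegeneracy $\|\phi'\|\neq0$ is noted.
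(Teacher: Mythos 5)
Your proof is correct and is essentially the argument the paper relies on: the paper's own ``proof'' of Lemma \ref{lemma2} is just a citation to Lemma 2.6 of \cite{CNP2}, whose content is exactly your scheme --- translate/modulate via the implicit function theorem to enforce the $L^2_{per}$-orthogonality $\langle v,\phi'\rangle=0$, Taylor-expand $V$ at $\phi$ using $V(\phi)=0$, $V'(\phi)=0$ and $\langle V''(\phi)v,v\rangle=\langle\mathcal{L}v,v\rangle+2N\langle\mathcal{L}\Phi,v\rangle^{2}$, invoke the coercivity of Lemma \ref{lemma1}, and absorb the remainder. The only point worth flagging is your use of the translation invariance of $Q$ (needed to replace $u$ by its translate without changing $V$), which hypothesis $(H_2)$ does not state explicitly but which holds in every application in the paper and is tacitly assumed in the cited framework, so it is not a genuine gap.
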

\begin{proof}
See Lemma 2.6 in \cite{CNP2}.	
\end{proof}

Finally, we are able to prove our main result.

\begin{proof}[Proof of Theorem \ref{teoest}]
Let $\varepsilon>0$ be given, which without loss of generality we assume $\varepsilon<\alpha/2$ where $\alpha$ is the constant given in Lemma \ref{lemma2}.. We need to show that there exists $\delta>0$ such that if $||u_0-\phi||_X<\delta$ then $\rho(u(t),\phi)<\varepsilon$, for all $t\geq0$.
 By using the continuity of $V$  at $\phi$ we may find $\delta\in (0,\alpha)$ such that if $||u_0-\phi||_X<\delta$  then
\be\label{estepsilon}
V(u_0)=V(u_0)-V(\phi)<D\varepsilon^2,
\ee
where $D>0$ is the constant in Lemma \ref{lemma2}. 

We will show that this $\delta$ is enough to our purpose. Indeed, since the function $t\mapsto\rho(u(t),\phi)$ is continuous and
$$
\rho(u_0,\phi)\leq \|u_0-\phi\|_X<\delta<\alpha,
$$
there exists $T>0$ such that 
\be\label{subalpha}
\rho(u(t),\phi)<\alpha,\ \ \  \mbox{for all}\ t\in [0,T),
\ee
which means $u(t)\in U_{\alpha}$, for all $t\in[0,T)$. Taking into account \eqref{estepsilon}, Lemma \ref{lemma2} and the fact that  $V(u(t))=V(u_0)$ for all $t\geq0$, we have
\be\label{estepsilon1}
\rho(u(t),\phi)<\varepsilon,\ \ \ \ \ \mbox{for all}\ t\in[0,T).
\ee

 Next, we prove that \eqref{subalpha} holds, for all $t\in [0,+\infty)$, which implies that \eqref{estepsilon1} also holds for all $t\in [0,+\infty)$ and yields the orbital stability. In fact, let  $T_1>0$ be the supremum of the values of $T>0$ for which $(\ref{subalpha})$ holds. We claim that  $T_1=+\infty$. On the contrary, by recalling that $\varepsilon<\alpha/2$ we obtain, from $(\ref{estepsilon1})$,
$$
\rho(u(t),\phi)<\frac{\alpha}{2}, \ \ \ \ \ \mbox{for all}\ t\in[0,T_1).
$$
By using again the continuity of the function $t\mapsto\rho(u(t),\phi)$, we may find $T_2>0$ such that
$\rho(u(t),\phi)<\frac{3}{4}\alpha<\alpha$, for $t\in [0,T_1+T_2)$, contradicting the maximality of $T_1$. This contradiction shows that, $T_1=+\infty$ and the theorem  is established.
\end{proof}

\section{Applications}\label{applications}

Here, we will apply the results developed in Section \ref{OSPW} in order to obtain the orbital stability of periodic waves for some systems arising in the current literature. The various applications in this section illustrate the effectiveness and versatility of our method.

\subsection{Coupled KdV System}

We start by considering the following coupled KdV system
\be\label{KDV}
\begin{cases}
	\partial_tu_{1} +\partial_x\big(B_1u_1^2+B_2u_1u_2+B_3u_2^2+\partial_x^2u_{1}\big)=0,\\
	\partial_tu_{2}+\partial_x\big(\frac{1}{2}B_2u_1^2+2B_3u_1u_2+B_4u_2^2+\partial_x^2u_{1}\big)=0,
\end{cases}
\ee
where $B_1,B_2,B_3$, and $B_4$ are real constants. Such systems appear as models describing the propagation of nonlinear waves, for instance, as the ones described by Gear-Grimshaw \cite{GG} and Majda-Biello \cite{maj}.
It is easy to see that \eqref{KDV} writes as \eqref{gKDVsystem} with $\mathcal{M}$ given in \eqref{partM} and
$$
R(u_1,u_2)=\frac{1}{3}B_1u_1^3+\frac{1}{2}B_2u_1^2u_2+B_3u_1u_2^2+\frac{1}{3}B_4u_2^3.
$$
In particular, \eqref{A1A2} holds with $s=2$. So, the energy space here is $X=H_{per}^1([0,L])\times H_{per}^1([0,L])$.

Let us search for proportional a periodic traveling wave $\phi=(\varphi(x-ct),\mu\varphi(x-ct))$, where $\mu$ is a real parameter. In this case, \eqref{travwave} reads as
\be\label{odekdvsystem}
\begin{cases}
-(\varphi''-c\varphi)-(B_1+B_2\mu+B_3\mu^2)\varphi^2+A_1=0,\\
-\mu(\varphi''-c\varphi)-(\frac{1}{2}B_2+2B_3\mu+B_4\mu^2)\varphi^2+A_2=0,
\end{cases}
\ee where $A_1$ and $A_2$ are constants of integration. By assuming that the cubic equation 
\begin{equation}\label{firstrelation}
\mu\left(B_1+B_2\mu+B_3\mu^2\right)=\frac{1}{2}B_2+2B_3\mu+B_4\mu^2
\end{equation}
has a real solution $\mu$ and taking $\mu A_1=A_2$ we see that system \eqref{odekdvsystem} reduces to a single equation. In particular, if we consider the change of variable $\varphi=\frac{1}{\xi}\widetilde{\varphi}$, with $\xi=2(B_1+B_2\mu+B_3\mu^2)\neq 0$, we obtain that $\widetilde{\varphi}$ must satisfy the ordinary differential equation
\begin{equation}\label{odekdv}
-\widetilde{\varphi}''+c\widetilde{\varphi}-\frac{1}{2}\widetilde{\varphi}^2+\widetilde{A}=0,
\end{equation}
 where $\xi A_1=\widetilde{A}$. In what follows we assume $\widetilde{A}=0$. Consequently, for any $L>0$, equation \eqref{odekdv} has  a positive $L$-periodic solution (see \cite[page 1144]{AN}) with the explicit form
 \begin{equation}\label{solkdv}
 \widetilde{\varphi}:=\widetilde{\varphi}_c(x)=\frac{16K^2}{L^2}\left(\sqrt{1-k^2+k^4}+1-2k^2\right)+\frac{48K^2k^2}{L^2}\cn^2\left(\frac{2K}{L}x,k\right),
 \end{equation}
with $c=c(k)=\frac{16K^2}{L^2}\left(\sqrt{1-k^2+k^4}\right)$. Here, $\cn$ represents the Jacobi elliptic function of  cnoidal type and $K=K(k)$ is the complete elliptic integral of the first kind. The parameter  $k\in(0,1)$ is the elliptic modulus (see, for instance, \cite{bird}).

Next, we will obtain the spectral properties related to the linearized operator
\begin{equation} \label{operatorkdv}
\begin{split}
\mathcal{L}&=
\left(-\partial_{x}^2+c\right)
\begin{pmatrix}
1 & 0 \\
0 & 1
\end{pmatrix}
- \varphi
\begin{pmatrix}
2B_1+\mu B_2 & B_2+2\mu B_3 \\
B_2+2\mu B_3 & 2B_3+2\mu B_4
\end{pmatrix}
 \\
&=\left(-\partial_{x}^2+c\right)
\begin{pmatrix}
1 & 0  \\
0 & 1
\end{pmatrix}
- \widetilde{\varphi}D
\end{split}
\end{equation}
where $D$ is the matrix 
$$D:=\frac{1}{\xi}
\begin{pmatrix}
2B_1+\mu B_2 & B_2+2\mu B_3 \\
B_2+2\mu B_3 & 2B_3+2\mu B_4 
\end{pmatrix}
$$

In order to obtain the spectral properties of $\mathcal{L}$ we will diagonalize it. 
The next result helps us to diagonalize the matrix $D$.

\begin{lemma}\label{eigenvalue}
The matrix $D$ has two real eigenvalues: $\lambda_1=1$ and $\lambda_2=\det D$.
\end{lemma}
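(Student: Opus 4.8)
The plan is to use that $D$ is symmetric, so that its eigenvalues are automatically real, and then to identify them explicitly with the help of the compatibility relation \eqref{firstrelation}. First I would observe that, writing $M:=\xi D$, the matrix $M$ is a symmetric $2\times 2$ matrix and $\xi\in\R\setminus\{0\}$; hence $D$ is symmetric as well and possesses two real eigenvalues $\lambda_1,\lambda_2$, counted with multiplicity, satisfying $\lambda_1\lambda_2=\det D$. Consequently it is enough to prove that $1$ is an eigenvalue of $D$: the remaining eigenvalue is then forced to equal $\det D$.

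The key step is to exhibit an eigenvector for the eigenvalue $1$, and the natural candidate is the proportionality direction $(1,\mu)$ of the traveling wave $\phi=(\varphi,\mu\varphi)$. A direct computation gives
\begin{equation*}
\begin{pmatrix}
2B_1+\mu B_2 & B_2+2\mu B_3 \\
B_2+2\mu B_3 & 2B_3+2\mu B_4
\end{pmatrix}
\begin{pmatrix} 1 \\ \mu \end{pmatrix}
=
\begin{pmatrix}
2B_1+2\mu B_2+2\mu^2 B_3 \\
B_2+4\mu B_3+2\mu^2 B_4
\end{pmatrix}.
\end{equation*}
The first entry is exactly $\xi$, while the second entry equals $\xi\mu$ precisely because, after multiplying \eqref{firstrelation} by $2$, one has $2\mu(B_1+B_2\mu+B_3\mu^2)=B_2+4B_3\mu+2B_4\mu^2$. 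Therefore $M(1,\mu)^{\top}=\xi\,(1,\mu)^{\top}$, that is $D(1,\mu)^{\top}=(1,\mu)^{\top}$, and so $\lambda_1=1$.

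It then remains only to invoke $\lambda_1\lambda_2=\det D$ to conclude $\lambda_2=\det D$, which finishes the argument. The whole proof hinges on the compatibility condition \eqref{firstrelation}: this is exactly what makes $(1,\mu)$ an eigenvector, and it reflects the fact that the proportional ansatz $\phi=(\varphi,\mu\varphi)$ collapses system \eqref{odekdvsystem} to the scalar equation \eqref{odekdv}. Thus I expect no analytic difficulty here; the only real content is the algebraic identity above, and the decisive observation is simply guessing the correct eigenvector $(1,\mu)$ rather than computing the characteristic polynomial blindly.
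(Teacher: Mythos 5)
Your proof is correct, and it takes a somewhat different route than the paper, although the two rest on the same algebraic identity. The paper works directly with the characteristic polynomial: it multiplies the second column of $D-\lambda I$ by $\mu$, adds it to the first column, and uses \eqref{firstrelation} to factor the determinant as $(1-\lambda)(\det D-\lambda)=0$, which yields both eigenvalues at once. Your argument instead exhibits the explicit eigenvector $(1,\mu)^{\top}$, gets reality from symmetry of $D$, and recovers the second eigenvalue from $\lambda_1\lambda_2=\det D$. These are really two faces of the same computation: after the paper's column operation, the first column of $\xi D-\xi\lambda I$ is exactly $\xi(1-\lambda)(1,\mu)^{\top}$, i.e.\ the image of $(1,\mu)^{\top}$ under $\xi D-\xi\lambda I$, so the paper's manipulation is your eigenvector calculation in disguise. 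What your version buys is conceptual clarity: it makes explicit that the eigenvalue $1$ corresponds to the proportionality direction of the ansatz $\phi=(\varphi,\mu\varphi)$, and that \eqref{firstrelation} is precisely the condition that $D$ preserves this direction. What the paper's version buys is economy: the factorization delivers both eigenvalues without invoking symmetry or the product-of-eigenvalues identity (both of which you use correctly; note also that symmetry is not strictly needed, since once one real eigenvalue of a real $2\times 2$ matrix is exhibited, the other is forced to be real because non-real eigenvalues occur in conjugate pairs).
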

\begin{proof}
First, note that $\det(D-\lambda I)=0$ if, and only if
\begin{equation}\label{a3}
\frac{1}{\xi^2}\det
\left( \begin{array}{cc}
2B_1+\mu B_2-\xi\lambda & B_2+2\mu B_3 \\
B_2+2\mu B_3 & 2B_3+2\mu B_4-\xi\lambda 
\end{array} \right)=0. 
\end{equation}
By multiplying the second column by $\mu$, adding the result to the first one and using \eqref{firstrelation} we see that \eqref{a3} is equivalent to
\begin{equation}
\frac{1}{\xi}(1-\lambda)\det
\left( \begin{array}{cc}
1 & B_2+2\mu B_3 \\
\mu & 2B_3+2\mu B_4-\xi\lambda 
\end{array} \right)=0. \nonumber
\end{equation}
 which in turn is equivalent to
$$(1-\lambda)(\det D-\lambda)=0.$$
Therefore, $\lambda_1=1$ and $\lambda_2=\det D$ solve $\det(D-\lambda I)=0$.
\end{proof}

Let $O$ be the orthogonal matrix whose columns are normalized eigenvectors of $D$ associated to the eigenvalues $1$ and $\det D$. It is easy to see that
\begin{equation}
O\mathcal{L}O^{-1}=
 \begin{pmatrix}
\mathcal{L}_1 & 0 \\
0 & \mathcal{L}_2
\end{pmatrix} 
\end{equation}
with
$$\mathcal{L}_1:=-\partial_{x}^2+c-\widetilde{\varphi} \quad \mbox{and} \quad \mathcal{L}_2:=-\partial_{x}^2+c-\lambda_2\widetilde{\varphi}. $$

With this information in hand we may establish the following result.
\begin{theorem}\label{teoLkdv}
Let $\varphi=\frac{1}{\xi}\widetilde{\varphi}$, where $\xi=2(B_1+B_2\mu+B_3\mu^2)$ and $\widetilde{\varphi}$ is defined in \eqref{solkdv}. If $\det D<\frac{1}{2}$, then assumption $(H_1)$ holds for $\mathcal{L}$ defined in \eqref{operatorkdv}.
\end{theorem}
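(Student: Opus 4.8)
The plan is to reduce the spectral analysis of the vector operator $\mathcal{L}$ to that of two scalar Schr\"odinger-type operators $\mathcal{L}_1$ and $\mathcal{L}_2$ via the orthogonal diagonalization already established, namely $O\mathcal{L}O^{-1}=\mathrm{diag}(\mathcal{L}_1,\mathcal{L}_2)$. Since $O$ is orthogonal (hence an isometry on $L^2_{per}\times L^2_{per}$ that preserves the domain $H^2_{per}\times H^2_{per}$), the spectra of $\mathcal{L}$ and of $\mathcal{L}_1\oplus\mathcal{L}_2$ coincide, with multiplicities adding. Thus the three claims in $(H_1)$ — exactly one simple negative eigenvalue, zero simple with eigenfunction $\phi'$, and the rest positive — will follow by separately counting the negative and zero eigenvalues of the two scalar operators and then summing.

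First I would analyze $\mathcal{L}_1=-\partial_x^2+c-\widetilde{\varphi}$. The key observation is that differentiating the scalar equation \eqref{odekdv} (with $\widetilde A=0$) gives $\mathcal{L}_1\widetilde{\varphi}'=0$, so $\widetilde{\varphi}'$ lies in the kernel. Because $\widetilde{\varphi}$ has the explicit cnoidal form \eqref{solkdv}, $\mathcal{L}_1$ is a Lam\'e-type operator whose spectral structure on $[0,L]$ is classical: I would invoke the standard Floquet/oscillation theory for such operators (as in the references on cnoidal-wave stability, e.g.\ \cite{AN}) to conclude that $\widetilde{\varphi}'$, having exactly two zeros per period, is the eigenfunction for the \emph{second} eigenvalue; hence $\mathcal{L}_1$ has exactly one (simple) negative eigenvalue and a simple zero eigenvalue with eigenfunction $\widetilde{\varphi}'$. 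Next I would analyze $\mathcal{L}_2=-\partial_x^2+c-\lambda_2\widetilde{\varphi}$ with $\lambda_2=\det D$. Here I would use a comparison argument in the coefficient $\lambda_2$: writing $\mathcal{L}_2=\mathcal{L}_1+(1-\lambda_2)\widetilde{\varphi}$ and recalling $\widetilde{\varphi}>0$, the hypothesis $\det D<\tfrac12$ is exactly what forces $\mathcal{L}_2$ to have \emph{no} negative eigenvalues and a trivial kernel. The cleanest route is to compare the lowest eigenvalues as functions of the potential strength $\lambda_2\in(0,1)$ and to identify the explicit threshold $\lambda_2=\tfrac12$ at which a second zero mode would appear; I expect the relevant test/eigenfunctions to be explicit polynomials in the Jacobi functions $\cn,\sn,\dn$, so that the condition $\det D<\tfrac12$ emerges from an explicit eigenvalue computation.

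Combining the two counts, $\mathcal{L}$ inherits exactly one simple negative eigenvalue (from $\mathcal{L}_1$) and a simple zero eigenvalue. For the kernel, I would check that the zero eigenfunction of $\mathcal{L}$ pulls back under $O^{-1}$ to $(\widetilde{\varphi}',0)$ up to the diagonalizing rotation, and then verify that this corresponds to $\phi'=(\varphi',\mu\varphi')=\tfrac1\xi(\widetilde{\varphi}',\mu\widetilde{\varphi}')$, which is indeed the translation mode annihilated by $\mathcal{L}$; the proportional structure guarantees that $\phi'$ is the image of $\widetilde{\varphi}'$ under the eigenvector direction for $\lambda_1=1$, so the kernel is exactly one-dimensional and spanned by $\phi'$ as required.

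The main obstacle I anticipate is the precise spectral count for $\mathcal{L}_2$ and pinning down the sharp threshold $\det D=\tfrac12$: establishing that $\lambda_2<\tfrac12$ prevents any negative eigenvalue and keeps the kernel trivial requires either an explicit diagonalization of the Lam\'e operator $\mathcal{L}_2$ (identifying its first few eigenvalues in closed form via the Jacobi-function ansatz) or a monotonicity argument tracking how the eigenvalues of $-\partial_x^2+c-\lambda\widetilde{\varphi}$ move as $\lambda$ increases through $\tfrac12$. By contrast, the analysis of $\mathcal{L}_1$ is standard, and the reduction via the orthogonal change of basis is routine; the entire difficulty is concentrated in controlling $\mathcal{L}_2$ at the critical coupling.
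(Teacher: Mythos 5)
Your reduction to the diagonal operator $O\mathcal{L}O^{-1}=\mathrm{diag}(\mathcal{L}_1,\mathcal{L}_2)$, the treatment of $\mathcal{L}_1$ via the known cnoidal-wave theory of \cite{AN}, and the identification of the kernel with the span of $\phi'$ all match the paper's proof. The genuine gap is exactly where you locate it yourself: the positivity of $\mathcal{L}_2=-\partial_x^2+c-\lambda_2\widetilde{\varphi}$ for $\lambda_2=\det D<\tfrac12$ is never established. You offer two possible routes (an explicit Lam\'e-type eigenvalue computation, or a monotonicity argument in the coupling $\lambda$), but you carry out neither, and your rewriting $\mathcal{L}_2=\mathcal{L}_1+(1-\lambda_2)\widetilde{\varphi}$ cannot close the argument by itself: adding a nonnegative potential to $\mathcal{L}_1$, which has a negative eigenvalue, gives no sign information without quantitative bounds. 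So, as written, the heart of the theorem --- the only place the hypothesis $\det D<\tfrac12$ enters --- is missing.

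The missing idea is short and needs no explicit spectral computation. Rewrite the traveling-wave ODE \eqref{odekdv} (with $\widetilde{A}=0$) as $\mathcal{L}_3\widetilde{\varphi}=0$, where $\mathcal{L}_3:=-\partial_x^2+c-\tfrac12\widetilde{\varphi}$; the coefficient $\tfrac12$ of the nonlinearity is precisely the source of the threshold in the hypothesis. Since $\widetilde{\varphi}>0$ on $[0,L]$, it is a zero mode of $\mathcal{L}_3$ with no zeros, so by oscillation theory (see \cite[Theorem 3.1.2]{Eastham}) zero is the \emph{first} eigenvalue of $\mathcal{L}_3$. Now compare potentials: for $\lambda_2<\tfrac12$ and $\widetilde{\varphi}>0$ one has $c-\lambda_2\widetilde{\varphi}>c-\tfrac12\widetilde{\varphi}$ pointwise, so the comparison theorem (see \cite[Theorem 2.2.2]{Eastham}) forces the first eigenvalue of $\mathcal{L}_2$ to lie strictly above that of $\mathcal{L}_3$, i.e.\ $\mathcal{L}_2>0$; note this covers all $\lambda_2<\tfrac12$, including $\lambda_2\le 0$, in one stroke. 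This also confirms your guess about sharpness: at $\lambda_2=\tfrac12$ the ground state of $\mathcal{L}_2$ is $\widetilde{\varphi}$ itself with eigenvalue zero. With this lemma in place, the rest of your argument (spectral count for the direct sum and identification of the kernel with $\phi'$) goes through as you wrote it.
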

\begin{proof}
	First note that the point spectrum of $\mathcal{L}$ coincides that of $O\mathcal{L}O^{-1}$. Thus it suffices to get the spectral properties of the diagonal operator $O\mathcal{L}O^{-1}$. 
 It is known in the literature that  $\mathcal{L}_1$ has only one negative eigenvalue, which is simple, and zero is a simple eigenvalue whose eigenfunction is $\widetilde\varphi'$ (see, for instance, \cite{AN} for details). On the other hand, note if $\mathcal{L}_3:=-\partial_{x}^2+c-\frac{1}{2}\widetilde{\varphi}$, then $\mathcal{L}_3\widetilde{\varphi}=0$, by \eqref{odekdv}. Since $\widetilde{\varphi}$ has no zeros in $[0,L]$, it follows from the oscillation theory (see, for instance, \cite[Theorem 3.1.2]{Eastham}) that $0$ is the first eigenvalue of $\mathcal{L}_3$. In consequence of the comparison theorem (see, for instance \cite[Theorem 2.2.2]{Eastham}) we conclude that $\mathcal{L}_2$ is a positive operator. Therefore, the proof is completed.
\end{proof}

In order to ensure the orbital stability of $\phi$ in $X=H_{per}^1[0,L]\times H_{per}^1[0,L]$ we now have to prove that $(H_2)$ holds. Indeed, Natali and Angulo showed in \cite[pages 1144 and 1145]{AN} that $c\in\left(\frac{4\pi^2}{L^2}, \infty\right)\mapsto \widetilde{\varphi}_c\in H_{per}^1[0,L]$ is a smooth curve of positive cnoidal waves solutions for \eqref{odekdv} (with $\widetilde{A}=0$), where $k:=k(c)$ is a strictly increasing smooth function. Moreover, they showed that $\frac{\partial}{\partial c}\|\widetilde{\varphi}\|^2_{L^2_{per}}$ is positive for all $c\in\left(\frac{4\pi^2}{L^2}, \infty\right)$. Thus, by setting $\Phi=\left(\frac{\partial}{\partial c}\varphi,\mu\frac{\partial}{\partial c}\varphi\right)\in X$, taking the derivative with respect to $c$ in \eqref{odekdvsystem} it is easily seen that $\mathcal{L}\Phi=-(\varphi,\mu\varphi)$. Hence, the functional $Q$ in $(H_2)$ may be taken to be $-F$, with $F$ as in \eqref{conser2}. In addition,
\begin{equation}
I=\langle\mathcal{L}\Phi,\Phi\rangle =-\frac{\left(\mu^2+1\right)}{2}\frac{\partial}{\partial c}\|\varphi\|^2_{L^2_{per}}
=-\frac{\left(\mu^2+1\right)}{2|\xi|}\frac{\partial}{\partial c}\|\tilde{\varphi}\|^2_{L^2_{per}} <0\nonumber
\end{equation}
 This implies that $(H_2)$ holds. 
 
As an application of Theorem \ref{teoest} we just have proved the following.

\begin{theorem}
Let $L>0$ be fixed. Suppose that \eqref{firstrelation} has a real solution $\mu$ and $\det D<\frac{1}{2}$. Let $\phi=(\varphi,\mu\varphi)=\frac{1}{\xi}(\widetilde{\varphi},\mu\widetilde{\varphi})$, with $\widetilde{\varphi}$ given in \eqref{solkdv}. Then $\phi$  is orbitally stable in $X=H_{per}^1([0,L])\times H_{per}^1([0,L])$ by the periodic flow of \eqref{KDV}. 
\end{theorem}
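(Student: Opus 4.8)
The plan is to apply Theorem \ref{teoest} directly, so the entire task reduces to verifying hypotheses $(H_1)$ and $(H_2)$ for the specific wave $\phi = (\varphi, \mu\varphi)$. The two hard analytic facts have in fact already been assembled in the preceding results, and the proof amounts to collecting them. First I would invoke Theorem \ref{teoLkdv}: under the standing assumptions that \eqref{firstrelation} has a real solution $\mu$ and that $\det D < 1/2$, that theorem already establishes $(H_1)$ for the operator $\mathcal{L}$ in \eqref{operatorkdv}, namely that $\mathcal{L}$ has exactly one negative eigenvalue (simple), and that zero is simple with eigenfunction $\phi'$. So no spectral work needs to be redone here; it suffices to cite Theorem \ref{teoLkdv}.

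Second, I would verify $(H_2)$ using the computations laid out just before the statement. The key structural fact is that the cnoidal family $c \mapsto \widetilde{\varphi}_c$ from \eqref{solkdv} is a smooth curve of solutions of \eqref{odekdv}, as shown in \cite{AN}. Differentiating the reduced system \eqref{odekdvsystem} with respect to the speed $c$ gives $\mathcal{L}\Phi = -(\varphi, \mu\varphi)$, where $\Phi = \left(\frac{\partial}{\partial c}\varphi,\ \mu\frac{\partial}{\partial c}\varphi\right) \in X$. Hence one may take $Q = -F$ with $F$ the conserved momentum \eqref{conser2}, so that $Q'(\phi) = \mathcal{L}\Phi$ and the orthogonality condition $\langle \mathcal{L}\Phi, \phi'\rangle = 0$ holds automatically (since $F$ is translation invariant, its derivative at $\phi$ is orthogonal to $\phi'$). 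The remaining negativity requirement $I = \langle \mathcal{L}\Phi, \Phi\rangle < 0$ follows from the explicit evaluation
\[
I = -\frac{\mu^2+1}{2}\,\frac{\partial}{\partial c}\|\varphi\|^2_{L^2_{per}} = -\frac{\mu^2+1}{2|\xi|}\,\frac{\partial}{\partial c}\|\widetilde{\varphi}\|^2_{L^2_{per}},
\]
which is strictly negative precisely because $\frac{\partial}{\partial c}\|\widetilde{\varphi}_c\|^2_{L^2_{per}} > 0$ for all admissible $c$, another fact established in \cite{AN}. This confirms $(H_2)$.

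With both hypotheses verified, the conclusion is immediate: Theorem \ref{teoest} yields that $\phi$ is orbitally stable in $X = H_{per}^1([0,L]) \times H_{per}^1([0,L])$ by the periodic flow of \eqref{KDV}. The main obstacle, properly speaking, is not in this final assembly but upstream: the Vakhitov--Kolokolov--type sign of $\frac{\partial}{\partial c}\|\widetilde{\varphi}_c\|^2_{L^2_{per}}$ and the smoothness of the cnoidal curve, both of which are delegated to \cite{AN}. I would therefore be careful to check that the admissible speed range $c \in \left(\frac{4\pi^2}{L^2}, \infty\right)$ guaranteed by \cite{AN} is consistent with the speed $c(k) = \frac{16K^2}{L^2}\sqrt{1 - k^2 + k^4}$ attached to the solution \eqref{solkdv}, and that the sign condition $\det D < 1/2$ (needed for $(H_1)$) does not interfere with the momentum monotonicity (needed for $(H_2)$); since the two conditions constrain disjoint data, this compatibility is routine but worth a sentence. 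Everything else is a direct citation of the general machinery.
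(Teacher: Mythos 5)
Your proposal is correct and follows exactly the paper's own route: $(H_1)$ is obtained by citing Theorem \ref{teoLkdv}, $(H_2)$ by differentiating \eqref{odekdvsystem} in $c$ to get $\mathcal{L}\Phi=-(\varphi,\mu\varphi)=Q'(\phi)$ with $Q=-F$ and by invoking the monotonicity $\frac{\partial}{\partial c}\|\widetilde{\varphi}_c\|^2_{L^2_{per}}>0$ from \cite{AN}, after which Theorem \ref{teoest} gives the conclusion. Your extra compatibility checks (speed range and independence of the two hypotheses) are sensible but not needed beyond what the paper already records.
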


\subsection{Coupled Modified KdV System}

This subsection is devoted to show the orbital stability of periodic waves for the following coupled system of modified KdV equations
\be\label{mKDV}
\begin{cases}
	\partial_tu_{1} +\partial_x\left(D_1u_1^3+D_2u_1^2u_2+D_3u_1u_2^2+D_4u_2^3+\partial_x^2u_{1}\right)=0,\\
\partial_t	u_{2}+\left(\frac{1}{3}D_2u_1^3+D_3u_1^2u_2+3D_4u_1u_2^2+D_5u_2^3+\partial_x^2u_{2}\right)=0,
\end{cases}
\ee
where $D_1,D_2,\ldots,D_5$ are real coefficients. Systems of modified KdV equation have appeared, for instance, in the inverse scattering theory (see \cite{Ablo}). If $B_3=1$ and the other coefficients are all zero, the system was studied for instance in \cite{Alarcon}, \cite{bhata}, and \cite{Cor}). In particular, existence and stability/instability of solitary waves were shown. 

Here, it is easy to see that \eqref{mKDV} writes as  \eqref{gKDVsystem} with $\mathcal{M}$ defined in \eqref{partM} and
$$
R(u_1,u_2)=\frac{1}{4}D_1u_1^4+\frac{1}{3}u_1^3u_2+\frac{1}{2}D_3u_1^2u_2^2+D_4u_1u_2^3+\frac{1}{4}D_5u_2^4.
$$
As in the last subsection, we look for a proportional periodic wave $\phi=(\varphi(x-ct),\mu\varphi(x-ct))$, where $\mu$ is a real parameter. If we substitute $\phi$ into \eqref{mKDV} we obtain the pair of equations (which is equivalent to \eqref{travwave})
\be\label{odemkdvsystem}
\begin{cases}
	-(\varphi''-c\varphi)-(D_1+D_2\mu+D_3\mu^2+D_4\mu^3)\varphi^3+A_1=0,\\
	-\mu(\varphi''-c\varphi)-(\frac{1}{3}D_2+D_3\mu+3D_4\mu^2+D_5\mu^3)\varphi^3+A_2=0,
\end{cases}
\ee where $A_1$ and $A_2$ are constants of integration. Next we assume that the cubic equation
\begin{equation}\label{firstrelationm}
	\mu\left(D_1+D_2\mu+D_3\mu^2+D_4\mu^3\right)=\frac{1}{3}D_2+D_3\mu+3D_4\mu^2+D_5\mu^3 
\end{equation}
has a real solution $\mu$ and $A_2$ comes into the form $\mu A_1$. Assuming further that $\zeta:=3(D_1+D_2\mu+D_3\mu^2+D_4\mu^3)> 0$ we may consider
 the scalar change $\varphi=\left(\frac{6}{\zeta}\right)^{\frac{1}{2}}\widetilde{\varphi}$ to see that \eqref{odemkdvsystem} writes as the single equation
\begin{equation}\label{odemkdv}
	-\widetilde{\varphi}''+c\widetilde{\varphi}-2\widetilde{\varphi}^3+\widetilde{A}=0, \quad \mbox{with} \quad \displaystyle \widetilde{A}=\left(\frac{\zeta}{6}\right)^{\frac{1}{2}}A_1.
\end{equation}
For any $L>0$, equation \eqref{odemkdv} admits a smooth curve of positive $L$-periodic solutions (see \cite[page 1258]{PA}) in the explicit form
\begin{equation}\label{solmkdv}
	\widetilde{\varphi}:=\widetilde{\varphi}_k(x)=\frac{4K}{\sqrt{2}Lg(k)}\left(\frac{\dn^2\left(\frac{2K}{L}x,k\right)}{1+\gamma^2\sn^2\left(\frac{2K}{L}x,k\right)}\right),
\end{equation}
where $\gamma^2=\sqrt{k^4-k^2+1}+k^2-1$, $g(k)=\sqrt{\sqrt{k^4-k^2+1}-k^2+\frac{1}{2}}$, and $\dn$ represents the Jacobi elliptic function of dnoidal type. In addition, the parameters $c$ and $\widetilde{A}$ may be written in terms of elliptic modulus as 
\begin{equation*}
c=c(k)=\frac{16K}{L^2}\sqrt{k^4-k^2+1} 
\end{equation*}
and
\begin{equation*}
\widetilde{A}=\widetilde{A}(k)=-\frac{32K^3}{3\sqrt{3}L^3}\left(\sqrt{k^4-k^2+1}-2k^2+1\right)\sqrt{2\sqrt{k^4-k^2+1}+2k^2-1}.
\end{equation*}

Once the periodic solutions of \eqref{mKDV} are obtained, the next step is to obtain the spectral properties of the linearized operator  
\begin{equation} \label{operatormkdv}
\begin{split}
	\mathcal{L}&= 
	\left(-\partial_{x}^2+c\right)
	\begin{pmatrix}
		1 & 0 \\
		0 & 1
	\end{pmatrix} - \varphi^2
	 \begin{pmatrix} 
		3D_1+2\mu D_2+\mu^2D_3 & D_2+2\mu D_3+3\mu^2D_4 \\
		D_2+2\mu D_3+3\mu^2D_4 & D_3+6\mu D_4+3\mu^2D_5 
	\end{pmatrix}\\
	&=\left(-\partial_{x}^2+c\right)
	\begin{pmatrix}
	1 & 0 \\
	0 & 1
	\end{pmatrix}-6\,\widetilde{\varphi}^2H,
	\end{split}
\end{equation}
where $H$ is the matrix
$$H=\frac{1}{\zeta}
 \begin{pmatrix}
	3D_1+2\mu D_2+\mu^2D_3 & D_2+2\mu D_3+3\mu^2D_4 \\
D_2+2\mu D_3+3\mu^2D_4 & D_3+6\mu D_4+3\mu^2D_5 
\end{pmatrix} .$$

By using the same arguments as those in the proof of Lemma \ref{eigenvalue} we see that the eigenvalues of $H$ are  $\lambda_1=1$ and $\lambda_2=\det H$. Hence,  there exists an orthogonal matrix $O$ such that
\begin{equation*}
	O\mathcal{L}O^{-1}=
	\begin{pmatrix}
		\mathcal{L}_1 & 0 \\
		0 & \mathcal{L}_2
	\end{pmatrix},
\end{equation*}
with
$$\mathcal{L}_1:=-\partial_{x}^2+c-6\widetilde{\varphi}^2 \quad \mbox{and} \quad \mathcal{L}_2:=-\partial_{x}^2+c-6\lambda_2\widetilde{\varphi}^2, $$

Thus, in order to obtain the spectrum of $\mathcal{L}$ it is sufficient to obtain the spectrum of $	O\mathcal{L}O^{-1}$.
\begin{theorem}
Let $\varphi=\frac{1}{\zeta}\widetilde{\varphi}$, where $\zeta=3(D_1+D_2\mu+D_3\mu^2+D_4\mu^3)> 0$ and $\widetilde{\varphi}$ is defined in \eqref{solmkdv}. If $\det H<\frac{1}{3}$, then assumption $(H_1)$ holds for $\mathcal{L}$ defined in \eqref{operatormkdv}.
\end{theorem}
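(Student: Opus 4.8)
The plan is to exploit the diagonalization already established and reduce the spectral analysis of $\mathcal{L}$ to that of the two scalar Hill operators $\mathcal{L}_1$ and $\mathcal{L}_2$. Since $O$ is orthogonal, the map $u\mapsto Ou$ is unitary on $L^2_{per}([0,L])\times L^2_{per}([0,L])$, so $\mathcal{L}$ and $O\mathcal{L}O^{-1}=\begin{pmatrix}\mathcal{L}_1&0\\0&\mathcal{L}_2\end{pmatrix}$ are unitarily equivalent; in particular they share the same spectrum, with eigenfunctions corresponding under $O^{-1}$. Recalling that $\lambda_2=\det H$, it therefore suffices to count the nonpositive eigenvalues of $\mathcal{L}_1=-\partial_x^2+c-6\widetilde{\varphi}^2$ and $\mathcal{L}_2=-\partial_x^2+c-6\det H\,\widetilde{\varphi}^2$ and to keep track of their kernels.

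For $\mathcal{L}_1$ I would invoke the known spectral picture for the dnoidal profile \eqref{solmkdv}: differentiating \eqref{odemkdv} gives $\mathcal{L}_1\widetilde{\varphi}'=0$, and the analysis in \cite{PA} shows that $\mathcal{L}_1$ has exactly one negative eigenvalue, which is simple, while $0$ is simple with eigenfunction $\widetilde{\varphi}'$. The crux is then to prove that $\mathcal{L}_2$ is strictly positive whenever $\det H<1/3$. For $\det H\le 0$ this is immediate, since $-6\det H\,\widetilde{\varphi}^2\ge 0$ and $c>0$. For $0<\det H<1/3$ I would compare $\mathcal{L}_2$ with the borderline operator $\mathcal{L}_3:=-\partial_x^2+c-2\widetilde{\varphi}^2$, which corresponds to the critical value $\det H=1/3$. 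Writing $\mathcal{L}_2=\mathcal{L}_3+6\left(\tfrac13-\det H\right)\widetilde{\varphi}^2$ and noting $\widetilde{\varphi}^2>0$, a strict form of the comparison theorem (in the spirit of \cite[Theorem 2.2.2]{Eastham}) reduces everything to showing $\mathcal{L}_3\ge 0$.

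Here lies the main obstacle. In the coupled KdV case (Theorem \ref{teoLkdv}) the borderline operator annihilates $\widetilde{\varphi}$ exactly, so oscillation theory applies directly; in the present setting \eqref{odemkdv} only gives $\mathcal{L}_3\widetilde{\varphi}=-\widetilde{A}$ with $\widetilde{A}\neq 0$, so $\widetilde{\varphi}$ is a positive supersolution rather than a null eigenfunction. To circumvent this I would first check from the explicit formula for $\widetilde{A}(k)$ that $\widetilde{A}\le 0$ for all $k\in(0,1)$ (the factor $\sqrt{k^4-k^2+1}+1-2k^2$ being nonnegative there and the remaining factors having fixed sign), so that $\mathcal{L}_3\widetilde{\varphi}=-\widetilde{A}\ge 0$. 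Then, testing against the lowest eigenfunction $\psi_0>0$ of the Hill operator $\mathcal{L}_3$ (which is simple and strictly positive) and using self-adjointness, $\nu_0\langle\widetilde{\varphi},\psi_0\rangle=\langle\mathcal{L}_3\widetilde{\varphi},\psi_0\rangle=-\widetilde{A}\langle 1,\psi_0\rangle\ge 0$; since $\langle\widetilde{\varphi},\psi_0\rangle>0$, the lowest eigenvalue satisfies $\nu_0\ge 0$, i.e.\ $\mathcal{L}_3\ge 0$, whence $\mathcal{L}_2>0$.

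Finally I would reassemble the information. Because $\mathcal{L}_2$ has no nonpositive eigenvalues, every negative eigenvalue and the entire kernel of $O\mathcal{L}O^{-1}$ come from $\mathcal{L}_1$; thus $\mathcal{L}$ has exactly one negative eigenvalue, simple, and a one-dimensional kernel. To identify the kernel with $\phi'=(\varphi',\mu\varphi')$, I would verify, exactly as in Lemma \ref{eigenvalue} but using \eqref{firstrelationm}, that $(1,\mu)^{T}$ is the eigenvector of $H$ associated with the eigenvalue $1$; consequently the $\mathcal{L}_1$-eigenfunction $\widetilde{\varphi}'$ pulls back under $O^{-1}$ to a scalar multiple of $(\widetilde{\varphi}',\mu\widetilde{\varphi}')$, that is, of $\phi'$. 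This shows that $0$ is a simple eigenvalue of $\mathcal{L}$ with eigenfunction $\phi'$, completing the verification of $(H_1)$.
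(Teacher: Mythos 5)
Your proposal is correct, and its skeleton matches the paper's: diagonalize $\mathcal{L}$ via the orthogonal matrix $O$, quote \cite{PA} for the spectrum of $\mathcal{L}_1$, and reduce everything to proving $\mathcal{L}_2>0$ when $\det H<\tfrac13$. Where you genuinely diverge is in the mechanism for that positivity step, i.e.\ in how the nonzero integration constant $\widetilde{A}$ is handled. The paper absorbs it into the potential: since $\widetilde{\varphi}>0$, it defines $\mathcal{L}_3:=-\partial_x^2+c-2\widetilde{\varphi}^2+\widetilde{A}/\widetilde{\varphi}$, so that \eqref{odemkdv} gives $\mathcal{L}_3\widetilde{\varphi}=0$ exactly; oscillation theory then makes $0$ the lowest eigenvalue of $\mathcal{L}_3$, and since $\widetilde{A}<0$ the pointwise inequality $-2\widetilde{\varphi}^2+\widetilde{A}/\widetilde{\varphi}<-2\widetilde{\varphi}^2<-6\lambda_2\widetilde{\varphi}^2$ together with the comparison theorem yields $\mathcal{L}_2>0$. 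You instead keep the unmodified borderline operator $\mathcal{L}_3=-\partial_x^2+c-2\widetilde{\varphi}^2$, note that $\mathcal{L}_3\widetilde{\varphi}=-\widetilde{A}>0$ makes $\widetilde{\varphi}$ a positive supersolution, and pair against the positive ground state $\psi_0$ to conclude that the lowest eigenvalue $\nu_0$ satisfies $\nu_0\langle\widetilde{\varphi},\psi_0\rangle=-\widetilde{A}\langle 1,\psi_0\rangle>0$, hence $\mathcal{L}_3\geq 0$ and $\mathcal{L}_2=\mathcal{L}_3+6\left(\tfrac13-\det H\right)\widetilde{\varphi}^2>0$. Both routes rest on the same two facts, $\widetilde{\varphi}>0$ and $\widetilde{A}<0$; the paper's trick stays entirely within standard Hill-operator comparison theory, while your pairing argument avoids introducing the term $\widetilde{A}/\widetilde{\varphi}$ and even gives the strict sign $\nu_0>0$ directly. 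You also supply two details the paper leaves implicit, and which are worth keeping: the verification that $\widetilde{A}<0$ for all $k\in(0,1)$ from the explicit formula, and the identification of $\ker\mathcal{L}$ with ${\rm span}\{\phi'\}$ by checking (via \eqref{firstrelationm}) that $(1,\mu)^{T}$ is the $H$-eigenvector for the eigenvalue $1$, so that the zero mode $\widetilde{\varphi}'$ of $\mathcal{L}_1$ pulls back under $O^{-1}$ to a multiple of $(\widetilde{\varphi}',\mu\widetilde{\varphi}')$.
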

\begin{proof}
The proof is similar to that of Theorem \ref{teoLkdv}. Indeed, de Andrade and Pastor showed in \cite[page 1259]{PA} that $\mathcal{L}_1$ has only one negative eigenvalue, which is simple, and zero is a simple eigenvalue whose eigenfunction is $\widetilde\varphi'$. Also, since $\widetilde{\varphi}$ is positive, we may define the operator $\mathcal{L}_3:=-\partial_{x}^2+c-2\widetilde{\varphi}^2+\widetilde{A}/\widetilde{\varphi}$. From \eqref{odemkdv} we see that $\mathcal{L}_3\widetilde{\varphi}=0$, which implies that 0 is the first eigenvalue of $\mathcal{L}_3$. Since $\widetilde{A}<0$, if $\det H<\frac{1}{3}$, we have
$$
-2\widetilde{\varphi}^2+\widetilde{A}/\widetilde{\varphi}<-2\widetilde{\varphi}^2< -6\lambda_2\widetilde{\varphi}^2.
$$
The comparison theorem then yields that $\mathcal{L}_2$ is a positive operator and the proof of the theorem is completed.
\end{proof}

It remains tho show that $(H_2)$ occurs. Here we will explore the fact that $k\mapsto\widetilde{\varphi}$ is a smooth curve with respect to $k\in(0,1)$. Let $\Phi=(\frac{\partial}{\partial k}\varphi,\mu\frac{\partial}{\partial k}\varphi)$ and 
$$
Q(u)=-\left(\frac{1}{2}\frac{\partial c}{\partial k}\int_{0}^{L}(u_1^2+ u_2^2)dx+\frac{\partial A_1}{\partial k}\int_{0}^{L}(u_1+\mu u_2)dx\right).
$$ 
It follows readily from \eqref{odemkdvsystem} that
\begin{equation}
	\mathcal{L}\Phi =- \begin{pmatrix}
	\frac{\partial c}{\partial k}\varphi+\frac{\partial A_1}{\partial k} 
	\vspace{0.2cm}
	\\
	\mu\left(\frac{\partial c}{\partial k}\varphi+\frac{\partial A_1}{\partial k}\right) 
	\end{pmatrix} =Q'(\phi)  \nonumber
\end{equation}
 and, clearly, $\langle	\mathcal{L}\Phi, \phi'\rangle=0$. Finally,
 \[
\begin{split}
I&=\langle\mathcal{L}\Phi,\Phi\rangle \nonumber \\ &=-\left(\mu^2+1\right)\left\langle\left(\frac{\partial c}{\partial k}\varphi+\frac{\partial A_1}{\partial k}\right),\frac{\partial}{\partial k}\varphi\right\rangle \nonumber \\
&=-\frac{6\left(\mu^2+1\right)}{\zeta}\left\langle\left(\frac{\partial c}{\partial k}\widetilde{\varphi}+\frac{\partial \widetilde{A}}{\partial k}\right),\frac{\partial}{\partial k}\widetilde{\varphi}\right\rangle \nonumber \\
&=:\frac{6\left(\mu^2+1\right)}{\zeta}\widetilde{I} 
\end{split}
\]

Since $\zeta$ is positive, we will have $I<0$ provided $\widetilde{I}<0$. However,
 de Andrade and  Pastor \cite[page 1262]{PA} proved that, at least numerically, $\widetilde{I}<0$ for all $k\in (0,1).$ As a consequence of Theorem \ref{teoest} we just have established the following.

\begin{theorem}
Let $L>0$ be fixed. Suppose that \eqref{firstrelationm} has a real solution $\mu$ and $\det D<\frac{1}{3}$. Let $\phi=(\varphi,\mu\varphi)=\left(\frac{6}{\zeta}\right)^{\frac{1}{2}}(\widetilde{\varphi},\mu\widetilde{\varphi})$, with $\widetilde{\varphi}$ given in \eqref{solmkdv}. Then $\phi$  is orbitally stable in $X=H_{per}^1([0,L])\times H_{per}^1([0,L])$ by the periodic flow of \eqref{mKDV}. 
\end{theorem}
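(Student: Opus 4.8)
The plan is to deduce the orbital stability directly from the abstract criterion of Theorem \ref{teoest}, so that the entire argument reduces to checking its two hypotheses $(H_1)$ and $(H_2)$ for the operator $\mathcal{L}$ in \eqref{operatormkdv} linearized about the proportional wave $\phi=(\varphi,\mu\varphi)$. The two verifications are carried out independently and then combined.

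For $(H_1)$ I would exploit the orthogonal conjugation $O\mathcal{L}O^{-1}=\mathrm{diag}(\mathcal{L}_1,\mathcal{L}_2)$, which preserves the point spectrum together with multiplicities, reducing everything to the two scalar Hill operators $\mathcal{L}_1=-\partial_x^2+c-6\widetilde{\varphi}^2$ and $\mathcal{L}_2=-\partial_x^2+c-6\lambda_2\widetilde{\varphi}^2$, with $\lambda_2=\det H$. The spectrum of $\mathcal{L}_1$ for the dnoidal profile \eqref{solmkdv}---one simple negative eigenvalue and a simple zero eigenvalue with eigenfunction $\widetilde{\varphi}'$---is the known picture borrowed from \cite{PA}. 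To control $\mathcal{L}_2$ I would introduce the auxiliary operator $\mathcal{L}_3=-\partial_x^2+c-2\widetilde{\varphi}^2+\widetilde{A}/\widetilde{\varphi}$, which annihilates the strictly positive function $\widetilde{\varphi}$ by \eqref{odemkdv}, so oscillation theory places its lowest eigenvalue at $0$; since $\widetilde{A}<0$ and $\det H<\tfrac13$ make the potential of $\mathcal{L}_2$ strictly larger than that of $\mathcal{L}_3$, the comparison theorem yields $\mathcal{L}_2>0$. Hence the negative and null directions of $\mathcal{L}$ come entirely from the $\mathcal{L}_1$-block, so $\mathcal{L}$ has a single simple negative eigenvalue and a one-dimensional kernel. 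Finally, differentiating the profile system \eqref{odemkdvsystem} in $x$ gives $\mathcal{L}\phi'=0$, so that kernel is exactly $\{\phi'\}$, and $(H_1)$ holds.

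For $(H_2)$ I would use the smooth dependence of the profile on the elliptic modulus $k$, taking $\Phi=(\partial_k\varphi,\mu\,\partial_k\varphi)\in X$ and choosing the conserved functional $Q$ as the fixed linear combination of momentum and mass with coefficients $\tfrac12\partial_k c$ and $\partial_k A_1$. Differentiating \eqref{odemkdvsystem} in $k$ produces precisely the identity $\mathcal{L}\Phi=Q'(\phi)$, while the orthogonality $\langle\mathcal{L}\Phi,\phi'\rangle=0$ is immediate because $\int_0^L\varphi\varphi'\,dx$ and $\int_0^L\varphi'\,dx$ both vanish by periodicity. The remaining sign condition $I=\langle\mathcal{L}\Phi,\Phi\rangle<0$ is the main obstacle: a short computation collapses $I$ into $\tfrac{6(\mu^2+1)}{\zeta}\widetilde{I}$ with $\zeta>0$, so its sign is that of the scalar quantity $\widetilde{I}$ attached to the single equation \eqref{odemkdv}, for which no closed-form evaluation is available; here I would invoke the inequality $\widetilde{I}<0$ for all $k\in(0,1)$ established (numerically) in \cite{PA}. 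With both $(H_1)$ and $(H_2)$ in force, Theorem \ref{teoest} applies and delivers the orbital stability of $\phi$ in $X=H_{per}^1([0,L])\times H_{per}^1([0,L])$ under the periodic flow of \eqref{mKDV}.
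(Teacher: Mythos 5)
Your proposal is correct and follows essentially the same route as the paper: verifying $(H_1)$ via the orthogonal diagonalization into $\mathcal{L}_1,\mathcal{L}_2$, the auxiliary operator $\mathcal{L}_3=-\partial_x^2+c-2\widetilde{\varphi}^2+\widetilde{A}/\widetilde{\varphi}$ and the comparison theorem under $\det H<\tfrac13$, and verifying $(H_2)$ with $\Phi=(\partial_k\varphi,\mu\,\partial_k\varphi)$, the conserved functional $Q$ built from momentum and mass with coefficients $\tfrac12\partial_k c$ and $\partial_k A_1$, and the numerically established sign $\widetilde{I}<0$ from \cite{PA}, before invoking Theorem \ref{teoest}. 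The only differences are cosmetic: you make explicit that $\mathcal{L}\phi'=0$ identifies the kernel, and you correctly read the hypothesis as $\det H<\tfrac13$ (the statement's ``$\det D$'' is a typo carried over from the KdV subsection).
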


\subsection{Log-KdV System}

Here we consider a particular case of system \eqref{gKDVsystem}  which  couples two waves in a logarithmic way. More precisely, we consider
\be\label{logKDV}
\begin{cases}
	\partial_tu_{1} +\frac{1}{2}\partial_x\left(u_2\log u_1^2+u_2\log u_2^2\right)+\partial_x^3u_{1}=0,\\
	\partial_t	u_{2}+\frac{1}{2}\partial_x\left(u_1\log u_1^2+u_1\log u_2^2\right)+\partial_x^3u_{2}=0,
\end{cases}
\ee
The logarithmic KdV equation (log-KdV) 
$$
v_t+\partial_x^3v+\frac{1}{2}v\log v^2=0
$$
was rigorously justified in \cite{dp}. It appears in the context of Fermi–Pasta–Ulam  lattices with nonsmooth nonlinear Hertzian potentials (see also \cite{james} and \cite{nesterenko}). 
System \eqref{logKDV} may be seen as a particular case of \eqref{gKDVsystem} with $\mathcal{M}$ as in \eqref{partM} and
$$
R(u_1,u_2)=\frac{1}{2}u_1u_2\log (u_1^2u_2^2)-u_1u_2.
$$
As was pointed out in \cite{CNP}, in view of the log-Sobolev inequality, the energy $E$  is well-defined in $X=H_{per}^1([0,L])\times H_{per}^1([0,L])$.

We look for proportional periodic waves in the form $\phi=(\varphi(x-ct),\varphi(x-ct))$. Hence, \eqref{travwave} can be rewritten as
\begin{equation}\label{odelogkdvsystem1}
\begin{cases}
-\varphi''+c\varphi-\varphi\log\varphi^2+A_1=0,\\
-\varphi''+c\varphi-\varphi\log\varphi^2+A_2=0,
\end{cases}
\end{equation} 
Demanding $A_1=A_2=A$, \eqref{odelogkdvsystem1} reduces to
\begin{equation}\label{odelogkdvsystem}
-\varphi''+c\varphi-\varphi\log\varphi^2+A=0.
\end{equation} 
Equation \eqref{odelogkdvsystem} is exactly the same one obtained in \cite[Equation (1.2)]{CNP}. In particular the existence of periodic solutions for \eqref{odelogkdvsystem} may be obtained as in \cite{CNP}. For the convenience of the reader we recall the result below.

\begin{theorem}\label{teoexisanzero}
Define
	\begin{equation}\label{P1}
	\mathcal{P}_1=\{(c,A)\in\R^2; \;c\in\R, \; |A|<2e^{c/2-1}\}\nonumber
	\end{equation}
	and
	\begin{equation}\label{P3}\nonumber
	\mathcal{P}_2=\{(c,A)\in\R^2; \;c\in\R, \; |A|>2e^{c/2-1}\}.
	\end{equation}
	Let $(c_0,A_0)\in \mathcal{P}_1\cup\mathcal{P}_2$ be fixed.	Then there are an open neighborhood $\mathcal{O}$ of
	$(c_0,A_0)$ and a smooth family,
	$$
	(c,A)\in \mathcal{O}\mapsto \varphi_{(c,A)} \in H_{per}^2([0,L_0]),
	$$ of even $L_0$-periodic solutions of $(\ref{odelogkdvsystem})$,
	which  depends smoothly on $(c,A)\in \mathcal{O}$. In this case, $L_0$ is the period of $\varphi_{(c_0,A_0)}$ and it is determined by the level curves of the energy related to \eqref{odelogkdvsystem}.
	
	In addition, if $A_0>0$ is sufficiently large, taking a small $\mathcal{O}$ if necessary, we may assume that $\varphi_{(c,A)}>1$. 
\end{theorem}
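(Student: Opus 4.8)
The plan is to prove Theorem \ref{teoexisanzero} by a direct application of the implicit function theorem, following the strategy used in \cite{CNP}. First I would rewrite the second-order equation \eqref{odelogkdvsystem} as a first-order planar Hamiltonian system for $(\varphi,\varphi')$ with Hamiltonian
\[
\mathcal{E}(\varphi,\varphi')=\frac{1}{2}(\varphi')^2-\frac{c}{2}\varphi^2+\frac{1}{2}\varphi^2\log\varphi^2-\frac{1}{2}\varphi^2-A\varphi,
\]
obtained by multiplying \eqref{odelogkdvsystem} by $\varphi'$ and integrating. The critical points of the potential
\[
V(\varphi)=-\frac{c}{2}\varphi^2+\frac{1}{2}\varphi^2\log\varphi^2-\frac{1}{2}\varphi^2-A\varphi
\]
are the roots of $V'(\varphi)=-c\varphi+\varphi\log\varphi^2+A=0$, and the geometry of the phase portrait (the existence of a center surrounded by a continuum of closed orbits, each corresponding to a periodic solution) is governed by the sign of $V''$ at these roots. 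The condition $|A|\neq 2e^{c/2-1}$ that distinguishes $\mathcal{P}_1\cup\mathcal{P}_2$ from its complement is exactly the condition that the potential $V$ has nondegenerate critical points: a short computation shows that $V'$ has a double root precisely when $|A|=2e^{c/2-1}$, so on $\mathcal{P}_1\cup\mathcal{P}_2$ one is guaranteed a nondegenerate center and hence a genuine family of periodic orbits.

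Next I would set up the period map. Fix $(c_0,A_0)\in\mathcal{P}_1\cup\mathcal{P}_2$ and let $\varphi_{(c_0,A_0)}$ be a periodic solution with minimal period $L_0$, which exists by the phase-plane analysis just described. I would then regard the period as a smooth function of the energy level and of the parameters $(c,A)$, and invoke the implicit function theorem in the Banach space $H_{per}^2([0,L_0])$ (with the even subspace, exploiting the reflection symmetry $x\mapsto -x$ that the equation enjoys). Concretely, define the map $F(\varphi,c,A)=-\varphi''+c\varphi-\varphi\log\varphi^2+A$ on $H_{per,\mathrm{even}}^2\times\R^2$; the Fr\'echet derivative $D_\varphi F$ at $(\varphi_{(c_0,A_0)},c_0,A_0)$ is precisely the scalar linearized operator $-\partial_x^2+c-\log\varphi^2-1$. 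The key invertibility step is to check that this operator, restricted to the even periodic functions, has trivial kernel: its kernel in the full space is spanned by $\varphi'$, which is odd, so it drops out of the even subspace, and nondegeneracy of the center guarantees there is no additional even kernel element. This makes $D_\varphi F$ an isomorphism onto its range and the implicit function theorem produces the smooth family $(c,A)\mapsto\varphi_{(c,A)}$ on a neighborhood $\mathcal{O}$.

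The hard part, and the place where the logarithmic nonlinearity demands care, is justifying the smoothness and the Fr\'echet differentiability of the map $\varphi\mapsto\varphi\log\varphi^2$ on $H_{per}^2$: the function $t\mapsto t\log t^2$ is not smooth at $t=0$, so one must either restrict to solutions bounded away from zero or use the $H_{per}^2\hookrightarrow C_{per}^1$ embedding together with the log-Sobolev machinery of \cite{CNP} to control the nonlinearity near the origin. This is exactly why the final clause of the theorem isolates the regime $A_0$ large, where $\varphi_{(c,A)}>1$ and the nonlinearity is manifestly smooth; in that subregime the differentiability obstruction disappears entirely. For the general case I would lean directly on the corresponding result in \cite{CNP}, since \eqref{odelogkdvsystem} is, as noted, identical to Equation (1.2) there, so the existence, smoothness, and positivity-for-large-$A$ conclusions transfer verbatim once the identification of the two equations is made explicit.
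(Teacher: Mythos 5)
Your overall route---phase-plane analysis of the potential to produce one periodic orbit at $(c_0,A_0)$, then the implicit function theorem on the even subspace to obtain the smooth family---is essentially the paper's route, which the paper compresses into a citation of \cite[Theorems 4.7 and 4.10]{CNP} (equation \eqref{odelogkdvsystem} being identical to Equation (1.2) there). Two computational slips first: with your $V$, the correct derivative is $V'(\varphi)=-c\varphi+\varphi\log\varphi^2-A$ (sign of $A$), although your conclusion that $|A|=2e^{c/2-1}$ characterizes exactly the degenerate critical points is right; and since $(t\log t^2)'=\log t^2+2$, the Fr\'echet derivative $D_\varphi F$ is $-\partial_x^2+c-2-\log\varphi^2$, not $-\partial_x^2+c-1-\log\varphi^2$. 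This is not cosmetic: that operator is precisely $\mathcal{L}_1$ in \eqref{operator12}, whose spectrum is used later. Also, your phrase ``nondegeneracy of the center guarantees there is no additional even kernel element'' is really the statement that zero is a simple eigenvalue of this Hill operator (Proposition \ref{oplogkdv}, i.e.\ \cite[Proposition 4.11]{CNP}); for a Hill operator with even potential, a two-dimensional kernel automatically contains an even solution, so triviality of the even kernel is equivalent to simplicity and needs that cited result, not just nondegeneracy of the equilibrium.

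The genuine gap is the final clause. You claim the ``positivity-for-large-$A$ conclusions transfer verbatim'' from \cite{CNP}, but this is exactly the one point that is \emph{not} in \cite{CNP}: the paper states that \cite[Theorems 4.7 and 4.10]{CNP} give everything else, and that ``the only point left'' is $\varphi_{(c,A)}>1$ for $A_0$ large, which it then settles by a phase-plane argument (illustrated in Figure \ref{figureA1A2}). Your discussion of the large-$A_0$ regime addresses only why the smoothness obstruction of the log nonlinearity disappears there, not why $\varphi>1$ actually holds; moreover, the paper needs $\varphi>1$ later not to smooth the nonlinearity but to make $\mathcal{L}_2=-\partial_x^2+c+\log\varphi^2$ in \eqref{operator12} a positive operator. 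The missing argument is available from your own setup: for $A>2e^{c/2-1}$ the unique critical point of $V$ is the positive root $\varphi_*$ of $\varphi(\log\varphi^2-c)=A$, which satisfies $\varphi_*>e^{c/2}$ and $\varphi_*\to\infty$ as $A\to\infty$, and it is a nondegenerate minimum since $V''(\varphi_*)=\log\varphi_*^2-c+2>2$; hence, for $A_0$ large, the closed orbits encircling this center with sufficiently small amplitude lie entirely in the region $\{\varphi>1\}$, one chooses $\varphi_{(c_0,A_0)}$ among them, and then continuity of $(c,A)\mapsto\varphi_{(c,A)}$ into $L^\infty_{per}$ (via $H^2_{per}\hookrightarrow L^\infty_{per}$), after shrinking $\mathcal{O}$, preserves the bound $\varphi_{(c,A)}>1$.
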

\begin{proof}
The proof of this theorem was essentially given in \cite[Theorems 4.7 and 4.10]{CNP}. The only point left is that  $\varphi_{(c,A)}>1$ if $A_0$ is sufficiently large, which follows easily from a phase plane analysis. The phase spaces for $A=1$ and $A=2$ are shown in Figure1 \ref{figureA1A2} below.
\end{proof}

\begin{figure}[h!]
	\centering
	\includegraphics[scale=0.35]{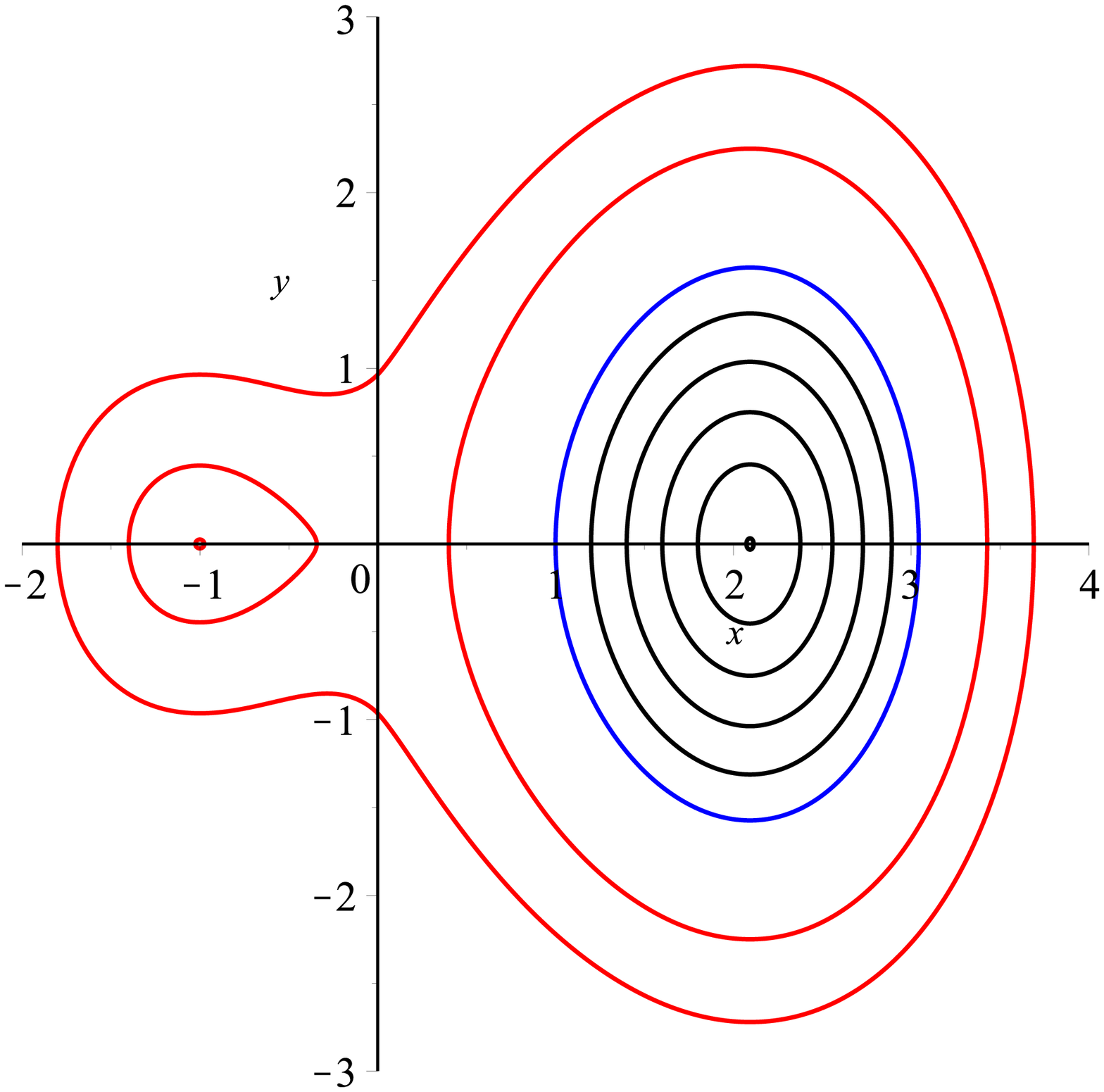} 
	\hspace{0.8cm}
	\includegraphics[scale=0.351]{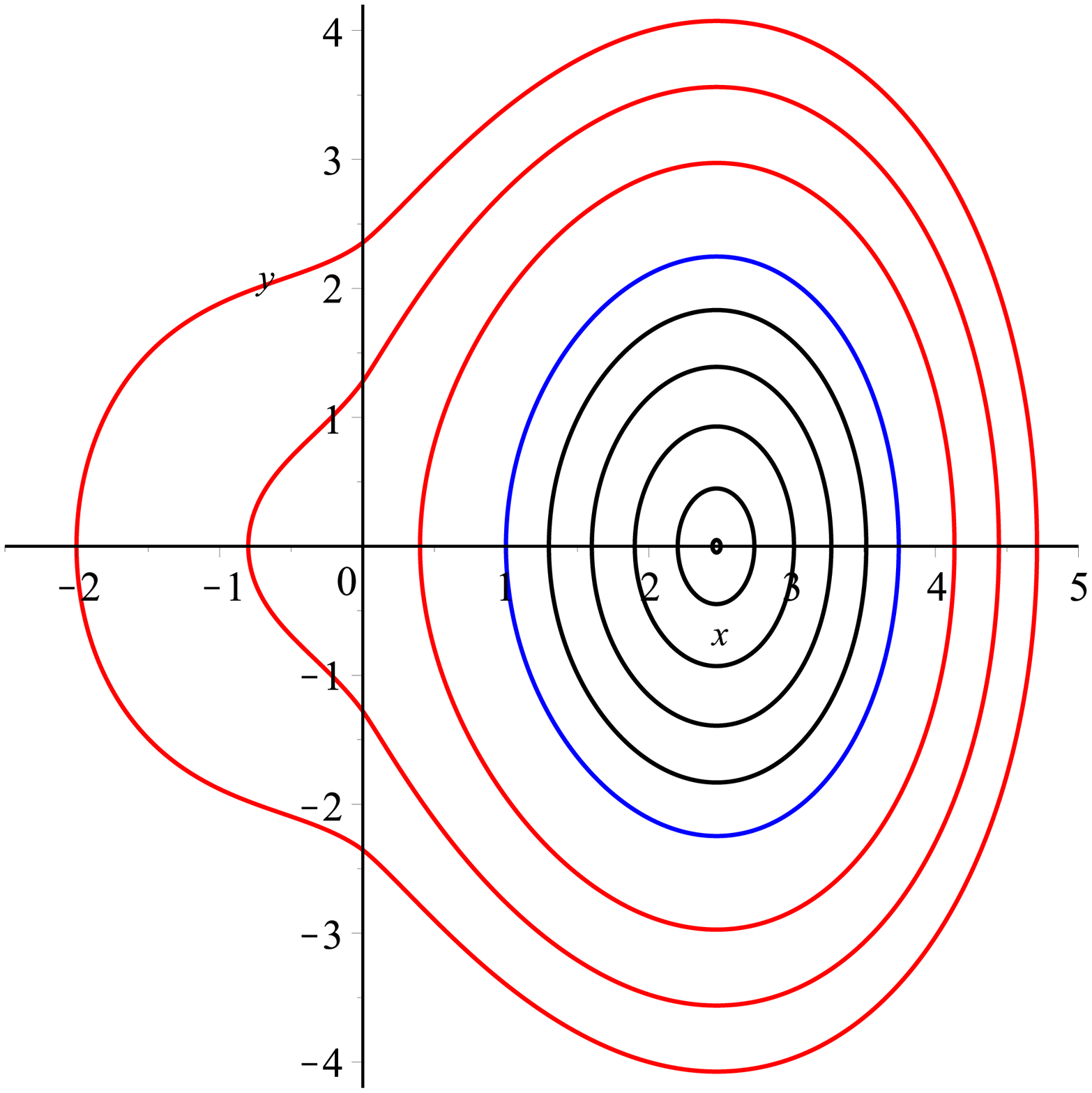}
	\caption{Left: Phase space for $c=1$ and $A=1$. Right: Phase space for $c=1$ and $A=2$. In both cases, the orbits in black are those for which $\varphi>1$.}
	\label{figureA1A2}
\end{figure}

We will therefore study the orbital stability of the solutions $(\varphi_{(c,A)},\varphi_{(c,A)})$, for $(c,A)$ in a neighborhood of $(c_0,A_0)$ with $A_0$ sufficiently large (fixed). To simplify notation we set $\varphi=\varphi_{(c,A)}$.
The linearized operator in this case has the form
\begin{equation}
\label{operatorlogkdv}
\mathcal{L}=
\left(-\partial_{x}^2+c\right)
 \begin{pmatrix}
1 & 0 \\
0 & 1
\end{pmatrix} -
 \begin{pmatrix}
1 & 1+\log \varphi^2 \\
1+\log \varphi^2 & 1
\end{pmatrix} 
\end{equation}

By setting $O$ to be the orthogonal  matrix
$${O}:=
 \begin{pmatrix}
1/\sqrt2 & 1/\sqrt2 \\
-1/\sqrt2 & 1/\sqrt2 
\end{pmatrix} ,$$
the operator $\mathcal{L}$ can be diagonalized. More precisely,
\begin{equation} \label{operator12}
O\mathcal{L}O^{-1}=
\begin{pmatrix}
\mathcal{L}_1 & 0 \\
0 & \mathcal{L}_2
\end{pmatrix}  
\end{equation}
where
$$\mathcal{L}_1:=-\partial_{x}^2+c-2-\log\varphi^2 \quad \mbox{and} \quad \mathcal{L}_2:=-\partial_{x}^2+c+\log \varphi^2. $$

Since $\varphi>1$ we have that $\mathcal{L}_2$ is a positive operator. It remains to study the spectral properties of $\mathcal{L}_1$. However, this has already been studied in  \cite{CNP}. More precisely, we have the following.

\begin{proposition}\label{oplogkdv}
For $(c,A)\in\mathcal{O}$, let
$\varphi_{(c,A)}$ be the $L_0$-periodic solution determined in Theorem \ref{teoexisanzero}. The closed, unbounded and self-adjoint
operator $\mathcal{L}_1$ has a
unique negative eigenvalue whose associated eigenfunction is even. Zero is  a simple eigenvalue with
associated eigenfunction $\varphi_{(c,A)}'$. Moreover, the rest of the
spectrum is bounded away from zero.
\end{proposition}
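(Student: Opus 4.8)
The plan is to treat $\mathcal{L}_1=-\partial_x^2+c-2-\log\varphi^2$ as a Hill operator, that is, a one-dimensional periodic Schr\"odinger operator with the smooth $L_0$-periodic potential $V:=c-2-\log\varphi^2$. As such it is self-adjoint on $L^2_{per}([0,L_0])$ with compact resolvent, so its spectrum is a discrete sequence of eigenvalues $\lambda_0<\lambda_1\le\lambda_2<\lambda_3\le\cdots\to+\infty$ accumulating only at $+\infty$; this already yields the last assertion, namely that the spectrum is bounded away from $0$ once the location of $0$ is pinned down. Differentiating the profile equation \eqref{odelogkdvsystem} with respect to $x$ gives $\mathcal{L}_1\varphi'=0$, so $0$ is an eigenvalue with eigenfunction $\varphi'$.

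Next I would invoke oscillation (Floquet) theory. For $A_0$ large, Theorem \ref{teoexisanzero} furnishes a positive even solution $\varphi$, and since \eqref{odelogkdvsystem} is autonomous its $L_0$-periodic orbits are closed curves symmetric about the $\varphi$-axis in the phase plane; hence $\varphi$ is a single hump, monotone on the half-period, and $\varphi'$ vanishes exactly twice in $[0,L_0)$, at the crest and the trough. By the oscillation theorem (see \cite[Theorem 3.1.2]{Eastham}) an eigenfunction with exactly two zeros per period is associated with $\lambda_1$ or $\lambda_2$; thus $0\in\{\lambda_1,\lambda_2\}$ and at most two eigenvalues are $\le 0$.

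To locate $0$ precisely I would use the evenness of $V$ to split $L^2_{per}([0,L_0])$ into its even and odd subspaces, which reduces $\mathcal{L}_1$ to a Neumann, respectively Dirichlet, problem on $[0,L_0/2]$. The odd eigenfunction $\varphi'$ vanishes at both endpoints ($\varphi'(0)=\varphi'(L_0/2)=0$) and has no zero in $(0,L_0/2)$, so it is the ground state of the Dirichlet problem; hence $0$ is the lowest odd eigenvalue, it is simple in the odd sector, and every other odd eigenvalue is strictly positive. Comparing ground states, the lowest even (Neumann) eigenvalue satisfies $\lambda_0^N<\lambda_0^D=0$ strictly, since the positive Neumann ground state does not vanish at the endpoints and so is not admissible for the Dirichlet problem. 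Therefore $\lambda_0=\lambda_0^N<0$ is a negative eigenvalue whose eigenfunction is sign-definite, and hence even.

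Everything then reduces to the sign of the second Neumann eigenvalue $\lambda_1^N$: a further eigenvalue $\le 0$ could only correspond to an even eigenfunction with two zeros, that is, the first excited Neumann mode, and the dichotomy is exactly $0=\lambda_1$ with $\lambda_1^N>0$ (a unique negative eigenvalue) versus $0=\lambda_2$ with $\lambda_1^N<0$ (two negative eigenvalues). Establishing $\lambda_1^N>0$ is the genuine obstacle. Since $\mathcal{L}_1=-\partial_x^2-f'(\varphi)$ is the Jacobi operator of the autonomous equation $\varphi''=f(\varphi)$, the sign of $\lambda_1^N$ is governed by the monotonicity of the period of $\varphi$ as a function of its energy, and I would obtain it either from that time-map analysis or by a continuity argument along the smooth family of Theorem \ref{teoexisanzero} anchored at a reference parameter where the count is transparent. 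This is precisely the spectral information already worked out in \cite{CNP}, which I would invoke to close the argument. Assembling the pieces yields $\lambda_0<0$ with even eigenfunction, $\lambda_1=0$ simple with eigenfunction $\varphi'$, and $\lambda_2>0$, with the remainder of the discrete spectrum bounded away from $0$.
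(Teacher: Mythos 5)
The paper's own ``proof'' of this proposition is a one-line citation to \cite[Proposition 4.11]{CNP}; your proposal reconstructs most of that argument and then, at the decisive point, imports the same reference. Your scaffolding is correct and applies cleanly here: since the regime under consideration has $\varphi_{(c,A)}>1$, the potential $c-2-\log\varphi^2$ is a smooth $L_0$-periodic function, so $\mathcal{L}_1$ is an honest Hill operator with spectrum $\lambda_0<\lambda_1\le\lambda_2<\lambda_3\le\cdots\to+\infty$; the identity $\mathcal{L}_1\varphi'=0$, the phase-plane count showing $\varphi'$ has exactly two zeros per period, the oscillation theorem of \cite{Eastham} placing $0\in\{\lambda_1,\lambda_2\}$ (which already forces $\lambda_0<0$), and the even/odd splitting into Neumann/Dirichlet problems on $[0,L_0/2]$ are all standard and correctly deployed. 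What you honestly flag as ``the genuine obstacle'' --- proving the second Neumann eigenvalue is strictly positive, equivalently that there is exactly one negative eigenvalue --- is exactly the content you, like the paper, end up taking from \cite{CNP}. So the two routes are not logically independent: the paper outsources everything, while you give a transparent reduction that isolates precisely which fact must be imported; that added structure is the value of your version, but it is not a self-contained alternative proof. Two small repairs if you keep your framework: first, your dichotomy $\lambda_1^N>0$ versus $\lambda_1^N<0$ silently excludes the borderline case $\lambda_1^N=0$, i.e.\ $\lambda_1=\lambda_2=0$ a double periodic eigenvalue; ruling this out is part of the assertion that zero is simple, so it too must come from the imported spectral input. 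Second, the strict inequality $\lambda_0^N<\lambda_0^D$ does not follow from the Neumann ground state being inadmissible for the Dirichlet problem (that observation points the wrong way); rather, $H_0^1\subset H^1$ gives $\lambda_0^N\le\lambda_0^D$, and equality would make the Dirichlet ground state a minimizer of the Neumann Rayleigh quotient, hence proportional to the (simple, endpoint-positive) Neumann ground state, contradicting that it vanishes at the endpoints.
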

\begin{proof}
See \cite[Proposition 4.11]{CNP}.
\end{proof}

In view of the last proposition we conclude that $(H_1)$ holds. Next we will check that $(H_2)$ also holds. For $(c,A)\in\mathcal{O}$, we define
$$
\qquad\eta:=\frac{\partial}{\partial c}\varphi_{(c,A)},\ \qquad\beta:=\frac{\partial}{\partial A}\varphi_{(c,A)},
$$
and set
$$
M_{c}(\phi)=2\frac{\partial}{\partial c}\int_0^{L_0}\varphi_{(c,A)}(x)dx,\qquad  M_{A}(\phi)=2\frac{\partial}{\partial A}\int_0^{L_0}\varphi_{(c,A)}(x)dx,
$$
and
$$
F_{c}(\phi)=\frac{\partial}{\partial{c}}\int_0^{L_0}\varphi_{(c,A)}^2(x)dx, \qquad F_{A}(\phi)=\frac{\partial}{\partial{A}}\int_0^{L_0}\varphi_{(c,A)}^2(x)dx.
$$

The following proposition gives a simple criterion to obtain $(H_2)$.

\begin{proposition}\label{propK}
Let $K:\R^2\to\R$ be the function defined as
\begin{equation}\label{defK}	K(x,y)=x^2F_{c}(\phi)+xy(M_{c}(\phi)+F_{A}(\phi))+y^2M_{A}(\phi).
\end{equation}	
Assume that there is $(a,b)\in\R^2$ such that $K(a,b)>0$. Then, $(H_2) $ holds.
\end{proposition}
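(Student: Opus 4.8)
The plan is to construct an explicit element $\Phi\in X$ satisfying the two requirements of $(H_2)$ by taking a suitable linear combination of the parameter-derivatives of the wave, and to produce the conserved functional $Q$ as the matching linear combination of the momentum $F$ and the mass $M$. The guiding principle is exactly the one already used in the KdV and mKdV subsections: differentiating the profile equation with respect to a parameter produces an element annihilated (up to an explicit right-hand side) by $\mathcal{L}$. Here there are two free parameters, $c$ and $A$, so I have a two-parameter family of such test functions at my disposal, and the condition $K(a,b)>0$ is precisely what lets me pick the combination making $I<0$.

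First I would set, for arbitrary real constants $a,b$,
\begin{equation*}
\Phi=\Phi_{a,b}:=\bigl(a\,\eta+b\,\beta,\;a\,\eta+b\,\beta\bigr)\in X,
\end{equation*}
where $\eta=\partial_c\varphi$ and $\beta=\partial_A\varphi$ are the smooth parameter-derivatives furnished by Theorem \ref{teoexisanzero}. Differentiating the profile equation \eqref{odelogkdvsystem} with respect to $c$ gives $(-\partial_x^2+c-1-\log\varphi^2)\eta=-\varphi$, and differentiating with respect to $A$ gives $(-\partial_x^2+c-1-\log\varphi^2)\beta=-1$. Applying $\mathcal{L}$ from \eqref{operatorlogkdv} to the diagonal vector $(f,f)$ produces $\mathcal{L}(f,f)=\bigl((-\partial_x^2+c-1-\log\varphi^2)f,\,(-\partial_x^2+c-1-\log\varphi^2)f\bigr)$ because the off-diagonal entry $1+\log\varphi^2$ combines with the diagonal, so by linearity
\begin{equation*}
\mathcal{L}\Phi_{a,b}=-\bigl(a\varphi+b,\;a\varphi+b\bigr).
\end{equation*}
Next I would define the conserved functional as the corresponding combination of the momentum and mass, namely
\begin{equation*}
Q(u):=-\bigl(a\,F(u)+b\,M(u)\bigr),
\end{equation*}
with $F$ and $M$ as in \eqref{conser2}--\eqref{conser3}; since these are conserved by the flow of \eqref{logKDV}, so is $Q$, and a direct computation of the Fr\'echet derivative gives $Q'(\phi)=-(a\varphi+b,\,a\varphi+b)=\mathcal{L}\Phi_{a,b}$, which is the required identity in $(H_2)$. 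The orthogonality condition $\langle\mathcal{L}\Phi,\phi'\rangle=0$ follows immediately because $\mathcal{L}\Phi_{a,b}$ is a constant combination of $\varphi$ and $1$, both even, while $\phi'=(\varphi',\varphi')$ is odd, so the inner product vanishes over the period; alternatively it follows from self-adjointness of $\mathcal{L}$ and $\mathcal{L}\phi'=0$.

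It then remains to verify the sign condition $I=\langle\mathcal{L}\Phi,\Phi\rangle<0$, and this is where the hypothesis on $K$ enters. Computing,
\begin{equation*}
I=\langle\mathcal{L}\Phi_{a,b},\Phi_{a,b}\rangle
=-2\int_0^{L_0}(a\varphi+b)(a\,\eta+b\,\beta)\,dx.
\end{equation*}
Expanding the product and recognizing the four integrals as the parameter-derivatives recorded before the statement — using $F_c(\phi)=2\int\varphi\,\eta$, $F_A(\phi)=2\int\varphi\,\beta$, $M_c(\phi)=2\int\eta$, $M_A(\phi)=2\int\beta$ — I obtain exactly $I=-K(a,b)$. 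Hence if $(a,b)$ is chosen so that $K(a,b)>0$, then $I<0$, and with this choice $\Phi_{a,b}$ and $Q$ satisfy all of $(H_2)$. The main obstacle is purely bookkeeping: one must check carefully that the coefficients produced by expanding $\int(a\varphi+b)(a\eta+b\beta)$ line up with the definition \eqref{defK}, in particular that the two cross terms $\int\varphi\beta$ and $\int\eta$ really combine into the single coefficient $M_c(\phi)+F_A(\phi)$ of $xy$; this is where interchanging $\partial_c$ and $\partial_A$ with the integral, and the symmetry of mixed partials, must be invoked, and it is the only genuinely nontrivial point of the argument.
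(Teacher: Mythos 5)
Your proposal is correct and follows essentially the same route as the paper's own proof: the same choice $\Phi=(a\eta+b\beta,\,a\eta+b\beta)$, the same conserved functional $Q=-(aF+bM)$, the verification $Q'(\phi)=\mathcal{L}\Phi$ together with $\langle\mathcal{L}\Phi,\phi'\rangle=0$, and the computation $I=\langle\mathcal{L}\Phi,\Phi\rangle=-K(a,b)$. One small slip worth fixing: the scalar operator acting on diagonal vectors is $\mathcal{L}_1=-\partial_x^2+c-2-\log\varphi^2$ (as in \eqref{operator12}), not $-\partial_x^2+c-1-\log\varphi^2$; since you use the same expression consistently both in the differentiated profile equations for $\eta,\beta$ and in the action of $\mathcal{L}$ on $(f,f)$, the discrepancies cancel and your conclusion $\mathcal{L}\Phi=-(a\varphi+b,\,a\varphi+b)$ is nevertheless the correct one.
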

\begin{proof}
Let
$\Phi=(\Phi_1,\Phi_2)\in X$ and $Q(u)=-(aF(u)+bM(u))$, where $\Phi_1=\Phi_2:=a\eta+b\beta$. By taking the derivative with respect to $c$ and $A$ in \eqref{odelogkdvsystem} it is not difficult to see that
$$
\mathcal{L}\Phi=-\begin{pmatrix}
a\varphi+b\\
a\varphi+b
\end{pmatrix}
=Q'(\phi).
$$
Moreover, clearly $\langle \mathcal{L}\Phi,\phi'\rangle=0$ and
\[
\begin{split}
\langle\mathcal{L}\Phi,\Phi \rangle
&=-(a^2F_{c}(\phi)+abM_{c}(\phi)+abF_{A}(\phi)+b^2M_{A}(\phi))\\
&=-K(a,b).
\end{split}
\]
The proof is thus completed.
\end{proof}

We will use the above proposition to show that if $A_0>$  then $(H_2)$ holds.

\begin{corollary}\label{cor2logkdv}
If $A_0>0$ then $(H_2)$ holds.
\end{corollary}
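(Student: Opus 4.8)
The plan is to apply Proposition \ref{propK}: it suffices to exhibit a single pair $(a,b)\in\R^2$ with $K(a,b)>0$, where $K$ is the quadratic form \eqref{defK}. Writing $K(x,y)=\langle \mathcal{K}(x,y),(x,y)\rangle_{\R^2}$ with the symmetric matrix
\[
\mathcal{K}=\begin{pmatrix} F_c(\phi) & \tfrac12(M_c(\phi)+F_A(\phi)) \\ \tfrac12(M_c(\phi)+F_A(\phi)) & M_A(\phi)\end{pmatrix},
\]
the existence of such a pair is exactly equivalent to $\mathcal{K}$ \emph{not} being negative semidefinite. So the whole task reduces to ruling out negative semidefiniteness of $\mathcal{K}$ when $A_0>0$, in which case Theorem \ref{teoexisanzero} guarantees $\varphi>1$.

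The main step is to extract closed relations among the entries of $\mathcal{K}$ directly from the profile equation. Differentiating \eqref{odelogkdvsystem} with respect to $c$ and $A$ gives $\mathcal{L}_1\eta=-\varphi$ and $\mathcal{L}_1\beta=-1$, where $\mathcal{L}_1$ is the operator in \eqref{operator12} and $\eta=\partial_c\varphi$, $\beta=\partial_A\varphi$. First, the symmetry $M_c(\phi)=F_A(\phi)$ follows at once from self-adjointness of $\mathcal{L}_1$: pairing $\mathcal{L}_1\eta=-\varphi$ with $\beta$ and $\mathcal{L}_1\beta=-1$ with $\eta$ yields $-\int\varphi\beta=\langle\mathcal{L}_1\eta,\beta\rangle=\langle\eta,\mathcal{L}_1\beta\rangle=-\int\eta$, i.e. $F_A(\phi)=M_c(\phi)$. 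Next, \eqref{odelogkdvsystem} gives $\mathcal{L}_1\varphi=-A-2\varphi$, so pairing $\mathcal{L}_1\eta=-\varphi$ and $\mathcal{L}_1\beta=-1$ against $\varphi$ and again using self-adjointness produces the two virial-type identities
\[
\tfrac{A}{2}M_c(\phi)+F_c(\phi)=S,\qquad \tfrac{A}{2}M_A(\phi)+F_A(\phi)=P,
\]
where $S=\int_0^{L_0}\varphi^2\,dx>0$ and $P=\int_0^{L_0}\varphi\,dx>0$, the positivity of $P$ coming from $\varphi>1$.

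With these in hand the conclusion is immediate. Using $M_c(\phi)=F_A(\phi)$ and eliminating $F_c(\phi)$ and $F_A(\phi)$ through the two identities, a short computation gives
\[
\det\mathcal{K}=F_c(\phi)M_A(\phi)-F_A(\phi)^2=M_A(\phi)\Big(S+\tfrac{A}{2}P\Big)-P^2 .
\]
Suppose, for contradiction, that $\mathcal{K}$ were negative semidefinite; then its diagonal entry would satisfy $M_A(\phi)\le0$, and since $A_0>0$ and $S,P>0$ we would obtain $\det\mathcal{K}\le -P^2<0$, contradicting the requirement $\det\mathcal{K}\ge0$ for a negative semidefinite matrix. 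Hence $\mathcal{K}$ is not negative semidefinite, $K$ attains a positive value, and Proposition \ref{propK} delivers $(H_2)$. The only delicate point is the bookkeeping of the previous paragraph, namely establishing the symmetry $M_c=F_A$ together with the two virial identities; once these are in place the determinant computation and the sign contradiction are routine.
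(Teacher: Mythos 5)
Your proof is correct and rests on the same skeleton as the paper's: both reduce matters to Proposition \ref{propK}, and your two ``virial-type'' identities are exactly the paper's \eqref{difFom} and \eqref{difFA} (note that $F(\phi)=S$ and $M(\phi)=2P$ for $\phi=(\varphi,\varphi)$), as is the symmetry $F_A(\phi)=M_c(\phi)$. The differences are in execution, and both are sound. First, you derive the symmetry directly from self-adjointness, $\langle\mathcal{L}_1\eta,\beta\rangle=\langle\eta,\mathcal{L}_1\beta\rangle$, while the paper differentiates \eqref{difFom} with respect to $A$ and \eqref{difFA} with respect to $c$ and equates mixed partials; your route is slightly more direct. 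Second, for the endgame the paper substitutes the identities into $K$ and exhibits an explicit positive direction, $b=\tfrac12 Aa$, chosen precisely so that the coefficient of the unknown-sign quantity $M_A(\phi)$ vanishes, yielding $K(a,b)=a^2\bigl(\tfrac14 AM(\phi)+F(\phi)\bigr)>0$; you instead argue by contradiction, computing $\det\mathcal{K}=M_A(\phi)\bigl(S+\tfrac{A}{2}P\bigr)-P^2$ and using that a negative semidefinite two-by-two symmetric matrix has nonpositive diagonal entries and nonnegative determinant. These are two dressings of the same algebra: the paper's explicit value $a^2\bigl(\tfrac14AM(\phi)+F(\phi)\bigr)=a^2\bigl(S+\tfrac{A}{2}P\bigr)$ is exactly the coefficient of $M_A(\phi)$ in your determinant formula. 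The paper's version is constructive; yours avoids having to guess the direction, at the cost of invoking the (elementary) semidefiniteness criteria. One shared caveat: like the paper, you need $P=\int_0^{L_0}\varphi\,dx>0$, which comes from $\varphi>1$; Theorem \ref{teoexisanzero} guarantees this only for $A_0$ sufficiently large (not for every $A_0>0$), so the hypothesis of the corollary should be read accordingly in both your proof and the paper's.
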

\begin{proof}
First we differentiate \eqref{odelogkdvsystem} with respect to $c$, multiplying the obtained equation by $\varphi$ and integrating on $[0,L_0]$ we obtain that, after using integration by parts,
	\begin{equation}\label{difFom}
F_c(\phi)=F(\phi)-\frac{A}{2}M_c(\phi).
\end{equation}
Similarly, but now differentiating \eqref{odelogkdvsystem}
with respect to $A$, we deduce
\begin{equation}\label{difFA}
F_A(\phi)=\frac{1}{2}\left(M(\phi)-AM_A(\phi)\right).
\end{equation}
Differentiating \eqref{difFom} with respect to $A$,  \eqref{difFA} with respect to $c$, and comparing the result we see that $F_A(\phi)=M_c(\phi)$, which implies that function $K$ in Proposition \ref{propK} may be written as
\begin{equation}\label{newk}
K(x,y)=x^2F_c(\phi)+2xyM_c(\phi)+y^2M_A(\phi).
\end{equation}
Now using \eqref{difFom} and \eqref{difFA} in \eqref{newk}, we obtain
$$K(x,y)=\left(y^2-Axy+\frac{1}{4}A^2x^2\right)M_A(\phi)+\left(xy-\frac{1}{4}Ax^2\right)M(\phi)+x^2F(\phi).$$
By choosing $a\neq0$ and $b=\frac{1}{2}Aa$, we get
$$K(a,b)=a^2\left(\frac{1}{4}AM(\phi)+F(\phi)\right)>0.$$
From Proposition \ref{propK} we conclude the proof.
\end{proof}

 As an application of Theorem \ref{teoest} we may establish the following.

\begin{theorem}
Fix  $(c_0,A_0)\in \mathcal{P}_1\cup\mathcal{P}_2$ with $A_0$ sufficiently large. Let $L_0$ be the period of $\varphi_{(c_0,A_0)}$. Let $\phi=(\varphi_{(c,A)},\varphi_{(c,A)})$, where $\varphi_{(c,A)}$ is given in Theorem \ref{teoexisanzero}.
 Then $\phi$  is orbitally stable in $X=H_{per}^1([0,L_0])\times H_{per}^1([0,L_0])$ by the periodic flow of \eqref{logKDV}. 
\end{theorem}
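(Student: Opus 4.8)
The plan is to verify that the proportional wave $\phi=(\varphi_{(c,A)},\varphi_{(c,A)})$ satisfies the two structural hypotheses $(H_1)$ and $(H_2)$ and then to invoke Theorem \ref{teoest}. All of the analytic ingredients have already been assembled above; what remains is to put them together in the right order, the only genuinely substantive point being where the logarithmic coupling and the sign condition on $A_0$ enter.

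First, to establish $(H_1)$, I would pass to the diagonalized operator $O\mathcal{L}O^{-1}=\mathrm{diag}(\mathcal{L}_1,\mathcal{L}_2)$ of \eqref{operator12}, using that the point spectrum of $\mathcal{L}$ coincides with that of its orthogonal conjugate. Since $A_0$ is taken sufficiently large, Theorem \ref{teoexisanzero} guarantees $\varphi_{(c,A)}>1$, whence $\log\varphi^2>0$ and $\mathcal{L}_2=-\partial_x^2+c+\log\varphi^2$ is strictly positive. Consequently every nonpositive eigenvalue of $O\mathcal{L}O^{-1}$ must originate from $\mathcal{L}_1$. By Proposition \ref{oplogkdv}, $\mathcal{L}_1$ has exactly one negative eigenvalue, which is simple, and a simple zero eigenvalue with eigenfunction $\varphi'$, the rest of its spectrum being bounded away from zero. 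Hence $\mathcal{L}$ inherits a single simple negative eigenvalue and a simple zero eigenvalue. To identify the kernel with $\phi'$, I would pull the eigenfunction $(\varphi',0)$ of $O\mathcal{L}O^{-1}$ back by $O^{-1}=O^{T}$, obtaining a scalar multiple of $(\varphi',\varphi')=\phi'$, exactly as $(H_1)$ demands.

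Next, for $(H_2)$, there is nothing further to invent: this is precisely the content of Corollary \ref{cor2logkdv}, which shows that when $A_0>0$ the function $K$ of Proposition \ref{propK} admits a point $(a,b)=(a,\tfrac{1}{2}Aa)$ with $K(a,b)=a^2\big(\tfrac{1}{4}AM(\phi)+F(\phi)\big)>0$. Setting $\Phi=(a\eta+b\beta,\,a\eta+b\beta)$ and $Q=-(aF+bM)$ then yields $\mathcal{L}\Phi=Q'(\phi)$, $\langle\mathcal{L}\Phi,\phi'\rangle=0$, and $I=\langle\mathcal{L}\Phi,\Phi\rangle=-K(a,b)<0$, so both requirements in \eqref{vak} hold with a conserved functional $Q$. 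With $(H_1)$ and $(H_2)$ in place, Theorem \ref{teoest} applies directly and yields the orbital stability of $\phi$ in $X$.

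The main obstacle is concentrated entirely in the inequality $\varphi>1$: it is what forces $\mathcal{L}_2$ to be positive and thereby confines all of the nonpositive spectrum to the scalar operator $\mathcal{L}_1$, whose spectrum is already understood from \cite{CNP}. This is precisely why the hypothesis ``$A_0$ sufficiently large'' cannot be removed, and it is the one place where the logarithmic nonlinearity genuinely interacts with the coupling. Once Proposition \ref{oplogkdv} and Corollary \ref{cor2logkdv} are invoked, the remaining argument is purely organizational.
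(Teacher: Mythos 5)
Your proposal is correct and follows essentially the same route as the paper: verify $(H_1)$ via the diagonalization $O\mathcal{L}O^{-1}=\mathrm{diag}(\mathcal{L}_1,\mathcal{L}_2)$, the positivity of $\mathcal{L}_2$ (from $\varphi>1$ when $A_0$ is large), and Proposition \ref{oplogkdv}, then verify $(H_2)$ via Corollary \ref{cor2logkdv}, and conclude by Theorem \ref{teoest}. Your explicit pull-back of the kernel eigenfunction $(\varphi',0)$ by $O^{T}$ to identify $\ker\mathcal{L}$ with the span of $\phi'=(\varphi',\varphi')$ is a detail the paper leaves implicit, but it is exactly the intended argument.
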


\subsection{Liu-Kubota-Ko System}

In this last application, we consider the reduced Liu-Kubota-Ko system (see also \cite{absta})
\be\label{LKK}
\begin{cases}
	\partial_tu_{1} +\partial_x\big(\frac{1}{2}u_1^2-\mathcal{M}_{11}u_1-\mathcal{M}_{12}u_2\big)=0,\\
	\partial_tu_{2} +\partial_x\big(\frac{1}{2}u_2^2-\mathcal{M}_{12}u_1-\mathcal{M}_{11}u_2\big)=0, 
\end{cases}
\ee
where $\mathcal{M}_{11}$ and $\mathcal{M}_{12}$ have  symbols 
$$m_{11}(\kappa)=\kappa\coth(\kappa H)-\frac{1}{H}, \quad H>0,$$ 
and 
 $$m_{12}(\kappa)=-\frac{\kappa}{\sinh(\kappa H)}.$$
 Hence, \eqref{LKK} writes as \eqref{gKDVsystem} with 
 $$
 R(u_1,u_2)=\frac{1}{6}(u_1^3+u_2^3).
 $$

First of all we remark that $m(\kappa)$ satisfies \eqref{A1A2} only for sufficiently large values of $|\kappa|$. In order to get around this problem we note there is a constant  $\sigma_0\neq 0$ such that \eqref{A1A2} holds with $\sigma_0{\rm Id_2}+m(\kappa)$ instead of $m(\kappa)$  (see \cite[pages 201 and 204]{AL}). So, instead of $E$ we need to consider the new functional 
$$E_0(u)=\int_{0}^{L}\left\{ \frac{1}{2}\langle \mathcal{D} u,u\rangle_{\R^2}-R(u)\right\}dx,$$
 where $\mathcal{D}=\sigma_0{\rm Id_2}+\mathcal{M}$.

If we substitute the periodic wave $\phi=(\varphi(x-\omega t),\varphi(x-\omega t))$ into \eqref{LKK}, we see that both equations reduce to the same one, namely,
\begin{equation}\label{odelkk}
(\mathcal{M}_{11}+\mathcal{M}_{12})\varphi+\omega\varphi-\frac{1}{2}\varphi^2=0
\end{equation}
where $\omega=c+\sigma_0$, $c\in\R$, and the constant of integration is assumed to be zero. To obtain a solution for \eqref{odelkk}, we note that, at least formally, 
$$
\lim_{H\to+\infty}(m_{11}(\kappa)+m_{12}(\kappa))=|\kappa|.
$$
Thus, \eqref{odelkk} reduces to 
\begin{equation}\label{odebo}
\mathcal{H}\partial_x\varphi+\omega\varphi-\frac{1}{2}\varphi^2=0,
\end{equation}
where $\mathcal{H}$ stands for the Hilbert transform. The pseudo-differential equation in \eqref{odebo} is exactly the one which appears when one looks for traveling waves of the well known Benjamin-Ono equation (see, for instance, \cite{AN}). So our idea to get a solution for \eqref{odelkk} is to see it as a perturbation of \eqref{odebo}. To do so, 
we set $W=\frac{1}{H}$ and define for $W\geq0$,
\begin{equation}
\theta(\kappa,W):=
\begin{cases}
\kappa\coth(\kappa W^{-1})-W-{\displaystyle \frac{\kappa}{\sinh(\kappa W^{-1})}}, \quad \mbox{if} \quad W>0,\\
|k|,\quad \mbox{if} \quad W=0.
\end{cases}
\end{equation}

 Note that, for each fixed $\kappa\in\mathbb{Z}$, $\lim_{W\to 0^+ }{\theta}(\kappa,W)=|\kappa|$, which means that $\theta(k,\cdot)$ is a continuous function on $[0,\infty).$
In what follows, to give the explicit dependence of  $\mathcal{M}_{11}+\mathcal{M}_{12}$ with respect to $W$ we denote $\mathcal{M}_{11}+\mathcal{M}_{12}=(\mathcal{M}_{11}+\mathcal{M}_{12})_W$.

If $W=0$, our analysis above  combined with the results in \cite[page 1138]{AN} yields that equation \eqref{odelkk} admits the existence of a smooth curve of even $L$-periodic  solutions $\omega\in\left(\frac{2\pi}{L},\infty\right)\mapsto\varphi_{(\omega,0)}\in H_{per}^n[0,L]$,  $n\in\mathbb{N}$, with
\begin{equation}
\varphi_{(\omega,0)}(x)=\frac{4\pi}{L}\left(\frac{\sinh\left(\gamma\right)}{\cosh\left(\gamma\right)-\cos\left(\frac{2\pi x}{L}\right)}\right)
\end{equation}
and $\gamma:=\gamma(\omega)=\tanh^{-1}\left(\frac{2\pi}{\omega L}\right)$. The existence of solutions for $W>0$ small is given below.

\begin{theorem}\label{existsurface}
Let $\omega_0>\frac{2\pi}{L}$ be fixed. There exist an open neighborhood $\mathcal{O}\subset  (\frac{2\pi}{L},\infty)\times[0,\infty)$ containing $(\omega_0,0)$ and a smooth surface
\begin{equation*}
(\omega,W)\in \mathcal{O}\mapsto \varphi_{(\omega,W)} \in H_{per,e}^1([0,L]), 
\end{equation*}
of even $L$-periodic solutions of \eqref{odelkk}. 
\end{theorem}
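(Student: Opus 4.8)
The plan is to realize $\varphi_{(\omega,W)}$ as a branch of zeros of a nonlinear operator bifurcating from the explicit Benjamin--Ono profile $\varphi_{(\omega_0,0)}$ via the Implicit Function Theorem, working throughout in the even subspace so as to eliminate the translation-induced kernel. Concretely, I would define
\[
F(\omega,W,\varphi):=(\mathcal{M}_{11}+\mathcal{M}_{12})_W\,\varphi+\omega\varphi-\tfrac12\varphi^2,
\]
regarded as a map from an open set of $(\tfrac{2\pi}{L},\infty)\times[0,\infty)\times H^1_{per,e}([0,L])$ into $L^2_{per,e}([0,L])$. This is well defined because the Fourier multiplier with symbol $\theta(\kappa,W)$ is of order one, hence bounded from $H^1_{per}$ into $L^2_{per}$, while $H^1_{per}$ is a Banach algebra so that $\varphi^2\in H^1_{per}\subset L^2_{per}$; all three terms preserve evenness. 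By construction $F(\omega_0,0,\varphi_{(\omega_0,0)})=0$, since for $W=0$ the equation \eqref{odelkk} reduces to \eqref{odebo}.

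Next I would verify that $F$ is smooth, the only delicate point being the dependence on $W$ up to the boundary value $W=0$. For this it suffices to show that $W\mapsto(\mathcal{M}_{11}+\mathcal{M}_{12})_W$ extends to a smooth map into $\mathcal{B}(H^1_{per},L^2_{per})$ near $W=0$. Writing
\[
\theta(\kappa,W)-|\kappa|=\kappa\big(\coth(\kappa W^{-1})-\operatorname{sgn}\kappa\big)-\frac{\kappa}{\sinh(\kappa W^{-1})}-W
\]
and using the expansions $\coth y=\operatorname{sgn}(y)+O(e^{-2|y|})$ and $\operatorname{csch} y=O(e^{-|y|})$, the corrections decay like $e^{-c|\kappa|/W}$; these terms, and all their $W$-derivatives, are uniformly bounded in $\kappa$ and vanish to infinite order as $W\to0^+$. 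One may therefore extend the symbol smoothly across $W=0$ (by $\theta(\kappa,W)=|\kappa|$ for $W\le 0$), obtaining a $C^\infty$ family of multipliers uniformly in $\kappa$, whence $F$ is smooth on a full neighborhood of $(\omega_0,0)$.

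The heart of the argument is to show that the partial derivative $D_\varphi F(\omega_0,0,\varphi_{(\omega_0,0)})$ is an isomorphism from $H^1_{per,e}([0,L])$ onto $L^2_{per,e}([0,L])$. This derivative is the linearized Benjamin--Ono operator $\mathcal{L}_{\mathrm{BO}}h=\mathcal{H}\partial_x h+\omega_0 h-\varphi_{(\omega_0,0)}\,h$, which is self-adjoint on $L^2_{per}$ with domain $H^1_{per}$ and parity-preserving (its symbol $|\kappa|$ is even and $\varphi_{(\omega_0,0)}$ is even). Since $\mathcal{H}\partial_x+\omega_0$ is invertible and multiplication by the smooth profile $\varphi_{(\omega_0,0)}$ is a relatively compact perturbation (it maps $H^1_{per}\to H^1_{per}$, and $H^1_{per}\hookrightarrow L^2_{per}$ compactly), $\mathcal{L}_{\mathrm{BO}}$ is Fredholm of index zero. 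By the known spectral picture for the periodic Benjamin--Ono wave (see \cite{AN}), zero is a simple eigenvalue with eigenfunction $\varphi_{(\omega_0,0)}'$, which is \emph{odd}; restricting to the even subspace therefore annihilates the kernel, so $\mathcal{L}_{\mathrm{BO}}$ has trivial kernel on $H^1_{per,e}$ and, being Fredholm of index zero and parity-preserving, is onto $L^2_{per,e}$. Hence it is an isomorphism between the even spaces.

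With these three ingredients the Implicit Function Theorem applies at $(\omega_0,0,\varphi_{(\omega_0,0)})$ and yields an open neighborhood $\mathcal{O}\subset(\tfrac{2\pi}{L},\infty)\times[0,\infty)$ of $(\omega_0,0)$ together with a smooth surface $(\omega,W)\in\mathcal{O}\mapsto\varphi_{(\omega,W)}\in H^1_{per,e}([0,L])$ of even $L$-periodic solutions of \eqref{odelkk}, with $\varphi_{(\omega,W)}$ reducing to $\varphi_{(\omega_0,0)}$ at the base point. The main obstacle is precisely the boundary regularity in $W$ at $W=0$ addressed in the second step: one must upgrade the pointwise-in-$\kappa$ flat behavior of $\theta(\kappa,\cdot)$ into genuine smoothness of the operator family in $\mathcal{B}(H^1_{per},L^2_{per})$, which is exactly what the uniform exponential estimates on $\theta$ and its $W$-derivatives provide.
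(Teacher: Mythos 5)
Your proposal is correct and follows essentially the same route as the paper: both set up the map $(\omega,W,\varphi)\mapsto(\mathcal{M}_{11}+\mathcal{M}_{12})_W\varphi+\omega\varphi-\tfrac12\varphi^2$ from the even space $H_{per,e}^1([0,L])$ into $L_{per,e}^2([0,L])$, use the fact (from \cite{AN}) that the kernel of the linearized periodic Benjamin--Ono operator is spanned by the odd function $\varphi_{(\omega_0,0)}'$ to get injectivity on the even subspace, and conclude via the Implicit Function Theorem --- your Fredholm index-zero argument for surjectivity is exactly the ``simple analysis'' the paper delegates to \cite{CNP2}, and your treatment of smoothness in $W$ fills in a point the paper merely asserts. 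One small slip in that last point: since $\theta(\kappa,W)-|\kappa|=-W+O\bigl(e^{-1/W}\bigr)$ uniformly in $\kappa\in\mathbb{Z}\setminus\{0\}$, the correct smooth extension across $W=0$ is $\theta(\kappa,W)=|\kappa|-W$ for $W\le 0$, not $|\kappa|$ (the $-W$ term is smooth but not flat, so your stated extension is only continuous); with that trivial fix your argument goes through verbatim.
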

\begin{proof}
We define $\Upsilon:(\frac{2\pi}{L},\infty)\times[0,\infty)\times H_{per,e}^{1}([0,L])\to L_{per,e}^2([0,L])$ by
\begin{equation}\label{Ups}
\Upsilon(\omega,W,f)=(\mathcal{M}_{11}+\mathcal{M}_{12})_W f+\omega f-\frac{1}{2}f^2,
\end{equation}
where $H_{per,e}^{1}([0,L])$  indicates the subspace of $H_{per}^{1}([0,L])$ constituted by even $L$-periodic functions. Since $\theta(\cdot,W)$ is an even function it follows that $(\mathcal{M}_{11}+\mathcal{M}_{12})_Wf$ is also even, for any  function $f$ in $H_{per,e}^{1}([0,L])$, implying that $\Upsilon$ is well defined.
	
Clearly, we have $\Upsilon(\omega_0,0,\varphi_{(\omega_0,0)})=0$. Moreover, note that $\Upsilon$ is smooth and its Fr\'echet derivative with respect to $f$ evaluated at $(\omega,W,\varphi)$ is 
$$\mathcal{G}_{(\omega,W)}=(\mathcal{M}_{11}+\mathcal{M}_{12})_{W}+\omega-\varphi.$$ 
Evaluating at $(\omega_0,0,\varphi_{(\omega_0,0)})$	we obtain (see Theorem 4.1 and page 1140 in \cite{AN}) that $\varphi_{(\omega_0,0)}'$ is an eigenfunction of  $\mathcal{G}_{(\omega_0,0)}$  associated to the eigenvalue zero.
Since $\varphi_{(\omega_0,0)}'$ does not belong to $H_{per,e}^{1}([0,L])$ (because it is odd), we conclude that $\mathcal{G}_{(\omega_0,0)}$ is one-to-one.  A simple analysis also shows that $\mathcal{G}_{(\omega_0,0)}$ is also surjective (see, for instance, \cite[Theorem 3.2]{CNP2}).  As consequence of the Implicit Function Theorem  we establish the desired result.
\end{proof}

\begin{remark}\label{inftyphi}
If $\varphi\in H_{per}^1([0,L])$ is a solution of $(\ref{odelkk})$ in the sense of distributions, then $\varphi \in H_{per}^n([0,L])$, for all $n\in\mathbb{N}$. This result can be proved by an iteration method and is a consequence of the fact that $\widehat{\varphi}\in \ell^1$ (see \cite[Proposition 3.1]{CNP2}). In particular, using the previous theorem, we have that $\varphi_{(\omega,W)}\to \varphi_{(\omega_0,0)}$, as $(\omega,W)\to (\omega_0,0)$, in $L^\infty_{per}([0,L])$.
\end{remark}

Next result shows that $\mathcal{G}_{(\omega,W)}$ has the same spectral property of $\mathcal{G}_{(\omega_0,0)}$  for $(\omega,W)$ sufficiently close to $(\omega_0,0)$.

\begin{proposition}\label{prop1}
Let $\varphi_{(\omega,W)}$ be the periodic traveling wave solution obtained in Theorem \ref{existsurface}. Then, there exists $\widetilde{\mathcal{O}}\subset\mathcal{O}$ such that, for all $(\omega,W)\in\widetilde{\mathcal{O}}$, operator $\mathcal{G}_{(\omega,W)}$ has only one negative eigenvalue which is simple and zero is a simple eigenvalue whose eigenfunction is $\phi_{(\omega,W)}'$.
\end{proposition}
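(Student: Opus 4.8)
The plan is to treat $\mathcal{G}_{(\omega,W)}$ as a small perturbation of the limiting operator $\mathcal{G}_{(\omega_0,0)}=\mathcal{H}\partial_x+\omega_0-\varphi_{(\omega_0,0)}$, whose spectral structure is exactly the one we wish to propagate: by the Benjamin--Ono theory in \cite{AN}, $\mathcal{G}_{(\omega_0,0)}$ possesses a single, simple negative eigenvalue, has $0$ as a simple eigenvalue with eigenfunction $\varphi_{(\omega_0,0)}'$, and the remainder of its spectrum is bounded away from $0$. Each $\mathcal{G}_{(\omega,W)}$ is self-adjoint (its symbol is real and even and the potential is a real multiplication), and since the dispersive symbol grows like $|\kappa|$ it has compact resolvent; hence the spectrum is purely discrete, consisting of isolated eigenvalues of finite multiplicity, which is what makes the spectral-projection machinery of Kato applicable. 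The whole argument then reduces to showing that $(\omega,W)\mapsto\mathcal{G}_{(\omega,W)}$ is continuous at $(\omega_0,0)$ in a strong enough sense, namely norm-resolvent convergence, which here will follow from convergence in operator norm.

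Writing the difference,
\[
\mathcal{G}_{(\omega,W)}-\mathcal{G}_{(\omega_0,0)}=\big[(\mathcal{M}_{11}+\mathcal{M}_{12})_W-\mathcal{H}\partial_x\big]+(\omega-\omega_0)-\big(\varphi_{(\omega,W)}-\varphi_{(\omega_0,0)}\big),
\]
I would estimate the three terms separately. The first is the Fourier multiplier with symbol $\theta(\kappa,W)-|\kappa|$, and the crux is to show this symbol is bounded uniformly in $\kappa\in\mathbb{Z}$ by a constant times $W$, so that the multiplier is a \emph{bounded} operator on $L_{per}^2$ of norm $O(W)$. Setting $x=\kappa W^{-1}$ one finds $\theta(\kappa,W)-|\kappa|=W\big(x(\coth x-1)-x/\sinh x-1\big)$, and the bracketed function of $x$ is bounded on $(0,\infty)$ (it tends to $-1$ both as $x\to0^+$ and as $x\to\infty$), which gives the claimed uniform bound. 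The second term is a scalar multiple of the identity of norm $|\omega-\omega_0|$, and the third is a multiplication operator of norm $\|\varphi_{(\omega,W)}-\varphi_{(\omega_0,0)}\|_{L^\infty_{per}}$, which tends to $0$ as $(\omega,W)\to(\omega_0,0)$ by Remark \ref{inftyphi}. Hence $\mathcal{G}_{(\omega,W)}-\mathcal{G}_{(\omega_0,0)}$ is a bounded operator whose norm tends to $0$, and the resolvents converge in norm.

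With norm-resolvent convergence in hand, I would conclude by the standard Riesz-projection argument. Choosing small circles $\Gamma_-$ around the negative eigenvalue and $\Gamma_0$ around $0$ that enclose no other spectrum of $\mathcal{G}_{(\omega_0,0)}$, the projections $\frac{1}{2\pi i}\oint_{\Gamma}(\mathcal{G}_{(\omega,W)}-z)^{-1}\,dz$ depend continuously on $(\omega,W)$ and therefore preserve their rank (equal to one) on a small neighborhood $\widetilde{\mathcal{O}}$ of $(\omega_0,0)$. Thus $\mathcal{G}_{(\omega,W)}$ has exactly one eigenvalue inside $\Gamma_-$, which is simple and remains real and negative (as $\Gamma_-$ can be taken to encircle only negative reals), and exactly one simple eigenvalue inside $\Gamma_0$. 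Differentiating \eqref{odelkk} in $x$ gives $\mathcal{G}_{(\omega,W)}\varphi_{(\omega,W)}'=0$, so the eigenvalue trapped inside $\Gamma_0$ is precisely $0$, with eigenfunction $\varphi_{(\omega,W)}'$. Finally, the uniform spectral gap of $\mathcal{G}_{(\omega_0,0)}$ together with norm-resolvent convergence keeps the rest of the spectrum positive and bounded away from $0$, so no further negative eigenvalue can emerge. I expect the only delicate point to be the uniform-in-$\kappa$ estimate on $\theta(\kappa,W)-|\kappa|$ that upgrades the pointwise limit $\theta(\kappa,W)\to|\kappa|$ to convergence in operator norm; once that is secured, the remaining steps are routine perturbation theory.
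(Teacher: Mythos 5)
Your proposal is correct, and it follows the same overall strategy as the paper: propagate the known Benjamin--Ono spectral picture at $(\omega_0,0)$ to nearby $(\omega,W)$ by showing that $\mathcal{G}_{(\omega,W)}\to\mathcal{G}_{(\omega_0,0)}$ in a norm strong enough for perturbation theory, using the same three-part decomposition (multiplier difference, $|\omega-\omega_0|$, and the $L^\infty$ difference of the profiles via Remark \ref{inftyphi}), and the same observation that $\mathcal{G}_{(\omega,W)}\varphi_{(\omega,W)}'=0$ pins the perturbed small eigenvalue at exactly zero. The differences are in the packaging and in one substantive point. The paper phrases the conclusion through Kato's gap metric $\widehat{\delta}$ (Theorems 2.17 and 3.16 of Chapter IV of \cite{kato1}), whereas you use bounded-perturbation norm-resolvent convergence plus rank-one Riesz projections around the negative eigenvalue and around zero; since the two operators share the domain $H_{per}^{1/2}([0,L])$ and differ by a bounded operator, your route is the more elementary one and avoids the gap-metric machinery. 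More importantly, the paper justifies $\sup_{\kappa}|\theta(\kappa,W)-\theta(\kappa,0)|\to 0$ only by appeal to ``the definition of $\theta$,'' having previously established merely the pointwise limit $\theta(\kappa,W)\to|\kappa|$ for each fixed $\kappa$ --- which by itself does not give convergence in the sup norm. Your scaling computation, writing $\theta(\kappa,W)-|\kappa|=W\bigl(x(\coth x-1)-x/\sinh x-1\bigr)$ with $x=\kappa W^{-1}$ and noting that the bracketed function is bounded on $(0,\infty)$ (it tends to $-1$ at both ends), supplies exactly the uniform $O(W)$ bound this step needs; you correctly identified it as the only delicate point, and on it your write-up is more complete than the paper's.
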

\begin{proof}
Assume $(\omega,W)\in \mathcal{O}$ and consider the self-adjoint operador
$$
\mathcal{G}_{(\omega,W)}=(\mathcal{M}_{11}+\mathcal{M}_{12})_{W}+\omega-\varphi_{(\omega,W)}.
$$ 
defined on $L_{per}^2([0,L])$ with domain $D(\mathcal{G}_{(\omega,W)})=H_{per}^{\frac{1}{2}}([0,L])$.
	
Let us show that ${\mathcal{G}}_{(\omega,W)}$ converges to ${\mathcal{G}}_{(\omega_0,0)}$, as $(\omega,W)\to (\omega_0,0)$, in the metric  \textit{gap} $\widehat{\delta}$ (see Sections 2 and 3 of Chapter IV in \cite{kato1}). First we write $\mathcal{G}_{(\omega,W)}=(\mathcal{M}_{11}+\mathcal{M}_{12})_{W}+N_{(\omega,W)}$, where $N_{(\omega,W)}f=(\omega-\varphi_{(\omega,W)})f$. Next, we remark that $(\mathcal{M}_{11}+\mathcal{M}_{12})_{W}-(\mathcal{M}_{11}+\mathcal{M}_{12})_{0}$ and $N_{(\omega,W)}-N_{(\omega_0,0)}$ are bounded operators on $L^2_{per}([0,L])$ with operator norms tending to $0$ as $(\omega,W)\to (\omega_0,0)$. In fact, one has
$$\|(\mathcal{M}_{11}+\mathcal{M}_{12})_{W}-(\mathcal{M}_{11}+\mathcal{M}_{12})_{0}\|=|\theta(k,W)-\theta(k,0)|_{\infty},$$
so, by the definition of $\theta$, it follows that 
\begin{equation}\label{limMS}
\lim_{W\to 0^+ }\|(\mathcal{M}_{11}+\mathcal{M}_{12})_{W}-(\mathcal{M}_{11}+\mathcal{M}_{12})_{0}\|=0
\end{equation}
Also, since 
$$\|N_{(\omega,W)}-N_{(\omega_0,0)}\|\leq |\omega-\omega_0|+\|\varphi_{(\omega,W)}-\varphi_{(\omega_0,0)}\|_{L^\infty_{per}([0,L])},$$
it follows, by Remark \ref{inftyphi}, that
\begin{equation}\label{limN}
\lim_{(\omega,W)\to (\omega_0,0) }\|N_{(\omega,W)}-N_{(\omega_0,0)}\|=0.
\end{equation}

Finally, Theorem 2.17 in \cite[Chapter IV]{kato1} implies that
\begin{eqnarray}
\widehat{\delta}(\mathcal{G}_{(\omega,W)},\mathcal{G}_{(\omega_0,0)})&=&\widehat{\delta}((\mathcal{M}_{11}+\mathcal{M}_{12})_{W}+N_{(\omega,W)},(\mathcal{M}_{11}+\mathcal{M}_{12})_{0}+N_{(\omega_0,0)}) \nonumber \\
&\leq&2\left(1+\|N_{(\omega,W)}\|\right)\times\widehat{\delta}((\mathcal{M}_{11}+\mathcal{M}_{12})_{W},(\mathcal{M}_{11}+\mathcal{M}_{12})_{0}\\
&&+N_{(\omega_0,0)}-N_{(\omega,W)}) \nonumber \\
&\leq&2\left(1+\|N_{(\omega,W)}\|\right)\times\left[\widehat{\delta}((\mathcal{M}_{11}+\mathcal{M}_{12})_{W},(\mathcal{M}_{11}+\mathcal{M}_{12})_{0}) \right. \nonumber \\
&&+\left. \widehat{\delta}((\mathcal{M}_{11}+\mathcal{M}_{12})_{0},(\mathcal{M}_{11}+\mathcal{M}_{12})_{0}+N_{(\omega_0,0)}-N_{(\omega,W)})\right] \nonumber \\
&\leq&2\left(1+\|N_{(\omega,W)}\|\right)\times\Big\{\|(\mathcal{M}_{11}+\mathcal{M}_{12})_{W}-(\mathcal{M}_{11}+\mathcal{M}_{12})_{0}\| \nonumber \\
&&+ \|N_{(\omega,W)}-N_{(\omega_0,0)}\|\Big\} \nonumber
\end{eqnarray}
Using \eqref{limMS} and \eqref{limN}, we conclude that $\widehat{\delta}(\mathcal{G}_{(\omega,W)},\mathcal{G}_{(\omega_0,0)})\to 0$, as $(\omega,W)\to (\omega_0,0)$. 
Consequently, by taking into account that zero is an eigenvalue of $\mathcal{G}_{(\omega,W)}$ with eigenfunction $\varphi_{(\omega,W)}'$, from Theorem 3.16  in \cite[Chapter IV]{kato1}, we conclude that  there exist a neighborhood $\widetilde{\mathcal{O}}$ of $(\omega_0,0)$ where $\mathcal{G}_{(\omega,W)}$ has the same spectral properties of $\mathcal{G}_{(\omega_0,0)}$, which is to say that it has only one negative eigenvalue which is simple and zero is a simple eigenvalue (see Theorem 4.1 and page 1140 in \cite{AN}).  
\end{proof}

From the above results we conclude that $H_1$ holds. 
Next, we are going to prove that $H_2$ holds. Since 
\begin{equation} \label{operatorlkk}
\mathcal{L}= 
\left( \begin{array}{cc}
\mathcal{G}_{(\omega,W)} & 0 \\
0 &\mathcal{G}_{(\omega,W)}
\end{array} \right),
\end{equation}
it follows that, considering $\Phi=(\frac{\partial}{\partial \omega}\varphi,\frac{\partial}{\partial \omega}\varphi)$, we get
\begin{equation}
I=\langle\mathcal{L}\Phi,\Phi\rangle =-2\frac{\partial}{\partial \omega}\|\varphi_{(\omega,W)}\|^2_{L^2_{per}}.
\end{equation} 

Recall that Angulo and Natali proved in \cite[page 1139]{AN} that 
$$\frac{\partial}{\partial \omega}\|\varphi_{(\omega_0,0)}\|^2_{L^2_{per}}>0$$
for each $\omega_0\in\left(\frac{2\pi}{L},\infty\right)$. Since the inequality is strict, it must also hold for values of $W$ which are sufficiently near of $0$. Theorem \ref{teoest} implies that $\varphi_{(\omega,W)}$ is orbitally stable in  
$X=H_{per}^{\frac{1}{2}}([0,L])\times H_{per}^\frac{1}{2}([0,L])$ by the periodic flow of \eqref{LKK}, provided that $W$ is sufficiently small.

\section*{Acknowledgements}

F. C. is supported by FAPESP/Brazil grant 2017/20760-0. A. P. is partially supported by CNPq/Brazil grants 402849/2016-7 and 303098/2016-3.

\vspace{1cm}


\begin{thebibliography}{99}
	
\bibitem{Ablo} M. J. Ablowitz, D. J. Kaup, A. C. Newell and H. Segur, \textit{Nonlinear evolution equations of physical significance}, Phys. Rev. Lett. 31 (1973), 125-127.

\bibitem{Alarcon} E. Alarcon, J. Angulo and J. F. Montenegro, \textit{Stability and instability of solitary waves for a nonlinear dispersive system}, Nonlinear Anal. 36 (1999), 1015-1035.

\bibitem{AC} B. Alvarez-Samaniego and X. Carvajal, \textit{On the local well-posedness for some systems of coupled KdV equations}, Nonlinear Anal. 69 (2008), 692-715.

\bibitem{ACW} J. M. Ash, J. Cohen and G. Wang, \textit{On strongly interacting internal solitary waves}, J. Fourier Anal. Appl. 2 (1996), 507-517.

\bibitem{absta}J. P. Albert, J. L. Bona and J. -C. Saut, \textit{Model equation for waves in stratified fluids}, Proc. Roy. Soe. Lodon Ser. A 453 (1997), 1233-1260.

\bibitem{AL}J. P. Albert and F. Linares, \textit{Stability and symmetry of solitary-wave solutions to systems modeling interactions of long waves}, J. Math. Pures Appl. 79 (2000), 195-226.


\bibitem{ANP} G. Alves, F. Natali and A. Pastor, \textit{Sufficient conditions for orbital stability of periodic traveling waves}, J. Differential Equations 267 (2019),  879-901.


\bibitem{ACN} T. P. Andrade, F. Crist\'ofani and F. Natali, \textit{Orbital stability of periodic traveling wave solutions for the Kawahara equation}, J. Math. Phys. 58 (2017), 051504.


\bibitem{AN} J. Angulo and F. Natali, \textit{Positivity properties of the Fourier transform and the stability of periodic travelling-wave solutions}, SIAM J. Math. Anal.
40 (2008), 1123-1151.









\bibitem{bona3} J. L. Bona, H. Chen and O. Karakashian, \textit{Stability of solitary-wave solutions of systems of dispersive equations}, Appl. Math. Optim. 75 (2017), 27-53.

\bibitem{bona4} J. L. Bona, M. Chen and J. -C. Saut, \textit{Boussinesq equations and other systems for small-amplitude long waves in nonlinear dispersive media: Part I. Derivation and linear theory}, J. Nonlinear Sci. 12 (2002), 283-318.

\bibitem{bona5} J. L. Bona, M. Chen and J. -C. Saut, \textit{Boussinesq equations and other systems for small-amplitude long waves in nonlinear dispersive media: Part II. Nonlinear theory}, Nonlinearity 17 (2004), 925-952.

\bibitem{bona2} J. L. Bona, P. E. Souganidis and W. A. Strauss, \textit{Stability and instability of
solitary waves of Korteweg-de Vries type}, Proc. Roy. Soc. London
Ser. A 411 (1987), 395-412.

\bibitem{bhata} S. Bhattarai, \textit{Existence of travelling-wave solutions to a coupled system of Korteweg–de Vries equations}, Nonlinear Anal. 127 (2015), 182-195.


\bibitem{bird} P. F. Byrd and M. D. Friedman, \textit{Handbook of elliptic integrals for engineers and scientists,} 2nd ed., Springer, New York, 1971.

\bibitem{Cor} A. J. Corcho and M. Panthee, \textit{Global well-posedness for a coupled modified KdV system},  Bull. Braz. Math. Soc. (N.S.) 43 (2012), 27-57.


\bibitem{CNP} F. Crist\'ofani, F. Natali and A. Pastor, \textit{Orbital stability of periodic traveling-wave solutions for the Log-KdV equation}, J. Differential Equations 263 (2017), 2630-2660.

\bibitem{CNP2} F. Crist\'ofani, F. Natali and A. Pastor, \textit{Periodic Traveling-wave solutions for regularized dispersive equations: Sufficient conditions for orbital stability with applications}, to appear in Commun. Math. Sci.

\bibitem{PA} T. P. de Andrade and A. Pastor, \textit{Orbital stability of one-parameter periodic traveling waves for dispersive equations and applications}, J. Math. Anal. and Appl. 475 (2019), 1242-1275.



\bibitem{dp} E. Dumas and D. Pelinovsky, \textit{Justification of the log-KdV equation in granular chains: the case of precompression}, SIAM J. Math. Anal. 46 (2014), 4075-4103.

\bibitem{Eastham} M. S. P. Eastham,
\emph{The Spectral Theory of
	Periodic Differential Equations}, Scottish Academic Press, London, 1973.

\bibitem{GG} J. Gear and R. Grinshaw, \textit{Weak and strong interactions between internal solitary waves}, Stud. Appl. Math. 70 (1984), 235-258.

\bibitem{grillakis1} M. Grillakis, J. Shatah and W. Strauss, \textit{Stability theory of solitary waves in the presence of symmetry
I,} J. Funct. Anal. 74 (1987), 160-197.






\bibitem{james} G. James and D. Pelinovsky,  \textit{Gaussian solitary waves and compactons in Fermi-Pasta-Ulam lattices with Hertzian potentials}, Proc. R. Soc. Lond. Ser. A Math. Phys. Eng. Sci. 470 (2014), 20130462.


\bibitem{kato1} T. Kato, \textit{Perturbation theory for linear Operators},
Springer, Berlin, 1976.

\bibitem{LKK} A. Liu, T. Kubota and D. Ko \textit{Resonant transfer of energy between nonlinear waves in neighboring pycnoclines}, Stud. Appl. Math. 63 (1980), 25-45. 



\bibitem{maj} A. Majda and J. Biello, \textit{The nonlinear interaction of barotropic and equatorial baroclinic Rossby Waves},  J. Atmos. Sci. 60 (2003), 1809-1821.


\bibitem{NP1} F. Natali and A. Pastor, \textit{The fourth-order dispersive nonlinear Schr\"odinger equation: orbital stability
of a standing wave}, SIAM J. Appl. Dyn. Systems 14 (2015), 1326-1347.


\bibitem{nesterenko} V. F. Nesterenko, \textit{Dynamics of heterogeneous materials}, Springer Verlag, New York, 2001

\bibitem{oh} T. Oh, \textit{Diophantine conditions in global well-posedness for coupled KdV systems}, Elec. J. Differ. Eq. 2009 (2009), 1-48.


\bibitem{Stuart} C. A. Stuart, \textit{Lectures on the orbital stability of standing waves and applications to the nonlinear Schrödinger
equation}, Milan J. Math. 76 (2008), 329-399.



\end{thebibliography}
\end{document}